\let\cal=\mathcal
\def\R{{\mathbb R}}
\newtheorem{thm}{Theorem}[section]
\newtheorem{lem}[thm]{Lemma}
\newtheorem{prop}[thm]{Proposition}
\theoremstyle{definition}
\newtheorem{de}[thm]{Definition}
\theoremstyle{remark}
\newtheorem{rem}[thm]{Remark}
\newtheorem{exam}[thm]{Example}
\newtheorem{ass}[thm]{Assumption}
\numberwithin{equation}{section}
\newcommand{\rmd}{{\rm d}}
\begin{document}

\title[The locally homeomorphic property of McKean-Vlasov SDEs]{The locally homeomorphic property of McKean-Vlasov SDEs under the global Lipschitz condition}

\author{Xianjin Cheng}
\address{X. Cheng: School of Mathematics, Dalian University of Technology, Dalian 116024, P. R. China}
\email{xjcheng1119@hotmail.com}

\author{Zhenxin Liu}
\address{Z. Liu (Corresponding author): School of Mathematics, Dalian University of Technology, Dalian 116024, P. R. China}
\email{zxliu@dlut.edu.cn}

\thanks{This work is supported by NSFC Grants 11871132, 11925102, and Dalian High-level Talent Innovation Program (Grant 2020RD09).}

\date{June 15, 2023}
\keywords{McKean-Vlasov SDEs; the locally diffeomorphic property; Lions derivative; generalized It\^o's formula.}
\subjclass[2010]{37A50, 60H05, 60H10, 60H20.}

\begin{abstract}
In this paper, we establish the locally diffeomorphic property of the solution to McKean-Vlasov stochastic differential equations defined on the Euclidean space. Our approach is built upon the insightful ideas put forth by Kunita. We observe that although the coefficients satisfy the global Lipschitz condition and some suitable regularity condition, the solution in general does not satisfy the globally homeomorphic property at any time except the initial time, which sets McKean-Vlasov stochastic differential equations apart significantly from classical stochastic differential equations. Finally, we provide an example to complement our results.
\end{abstract}

\maketitle

\section{Introduction}
\setcounter{equation}{0}
In order to study the evolution of the density of particles with time, position, and velocity parameters, Kac \cite{KAC} introduced McKean-Vlasov stochastic differential equations (SDEs), also known as the mean-field SDEs. These equations describe the motion of individual particles in a system, where each particle is influenced not only by its own position but also by all particles in the system. This interaction among the particles refers to as the mean field interaction. There are various types of mean-field interactions; see e.g.\ \cite{O}, \cite{MR}. In this paper, we consider the general case: the $d$-dimensional Stratonovich symmetric McKean-Vlasov SDE with the deterministic initial value $x\in\R^d$
\begin{align}\label{isde3}
X_t=x+\sum_{k=0}^{d'}\int_s^t V_k(X_r,P_{X_r})\circ \rmd W_r^k,
\end{align}
where $P_{X_r}$ is the law of $X_r$, $W_t=(W_t^1,...,W^{d'}_t)$ is a $d'$-dimensional Brownian motion, $\circ \rmd W_r^k$ with $k\neq 0$ refers to the Stratonovich symmetric integral, $\circ\rmd W_r^0=\rmd r$ and $V_k$ is a $d$-dimensional function defined on the product space of $\R^d$ and $\cal{P}_2(\R^d)$(the set of probability measures on $\R^d$ with finite second moments). There has been a great deal of works on the existence and uniqueness of solutions and invariant measures for McKean-Vlasov SDEs. For example, Li and Min \cite{LM} obtained the existence and uniqueness of weak solutions when the drift coefficient is bounded measurable and the diffusion coefficient is nondegenerate and Lipschitz continuous. Carmona \cite{C} obtained the existence and uniqueness of strong solutions under Lipschitz conditions. Wang \cite{W} showed the existence and uniqueness of strong/weak solutions and invariant measures under monotone conditions. Dos Reis et al. \cite{F} established existence and uniqueness of strong solutions under random coefficients and drifts of superlinear growth. Mishura and Veretennikov \cite{MV} obtained strong and weak existence/uniqueness of solutions under relaxed regularity conditions. R\"ockner and Zhang \cite{RZ} showed the strong well-posedness of McKean-Vlasov SDEs with the non-degenerate diffusion coefficient under some integrability assumptions in the spatial variable and the Lipschitz continuity in the distribution variable. Liu and Ma \cite{Ma} showed the existence and uniqueness of solutions and exponential ergodicity under the Lyapunov condition, and so on. We denote the unique solution of the above equation by $\Phi_{s,t}(x)$.  Notice that $\Phi_{s,t}(x)$ does not define a flow, as $\Phi_{r,t}(\Phi_{s,r}(x))$ is not the solution of \eqref{isde3} with $\Phi_{s,r}(x)$ as the initial value; see e.g.\ \cite{BLPR} for details. Although the solution to McKean-Vlasov SDEs exists in pairs with its distribution, we can still discuss the homeomorphic property of the solution $\Phi_{s,r}(x)$. The influence of its distribution $P_{\Phi_{s,r}(x)}$ on the homeomorphic property is mainly reflected in the fact that the Jacobian matrix $\partial_x\Phi_{s,t}(x)$ does not satisfy the global invertibility property.

As we know, a classical SDE generates a stochastic flow of diffeomorphisms which has attracted extensive attention. For example, Elworthy \cite{KDE} and Baxendale \cite{PB} converted the classical SDE defined on $\R^d$ into one on some Hilbert manifold consisting of all diffeomorphisms on $\R^d$ and then the diffeomorphism is obtained directly by solving the latter equation. Malliavin \cite{M}, Bismut \cite{B} and Ikeda-Watanabe~\cite{IW} took the flow of the classical SDE as the limit of a sequence of ordinary differential equations with the homeomorphic property, and they demonstrated that the limit also satisfies the homeomorphic property. Meyer~\cite{Me} and Kunita \cite{K1} showed the homeomorphic property through a skillful use of Kolmogorov's continuity criterion. Subsequently, Kunita \cite{K} discovered the inverse mapping of the solution to the classical SDE by utilizing generalized It\^o's formula, leading to the establishment of the homeomorphic property. Moreover, when coefficients of the classical SDE satisfy the global (resp.\ local) Lipschitz condition, the globally (resp.\ locally) homeomorphic property can be guaranteed. The reason why the local Lipschitz condition cannot guarantee the global homeomorphic property is due to the possibility of solution blow-up; see \cite{K1} for more details. However, it should be noted that for McKean-Vlasov SDEs, the globally homeomorphic property does not necessarily hold even under the global Lipschitz condition. This significant distinction between McKean-Vlasov SDEs and classical SDEs stems from the fact that the invertibility of the Jacobian matrix $\partial_x\Phi_{s,t}(x)$ holds only up to a certain stopping time dependent on $x$, rather than at arbitrary times. See Theorem~\ref{thm2} for details.

We will analyze the difficulty overcome in this paper from the following aspect. For classical SDEs, the Jacobian matrix $\partial_x Y_{s,t}(x)$ of the solution $Y_{s,t}(x)$ is globally invertible for any $x$, which makes the inverse mapping $Z_{s,t}$ of $Y_{s,t}$ globally well-defined. However, in the case of McKean-Vlasov SDEs, the invertibility of the Jacobian matrix $\partial_x\Phi_{s,t}(x)$ is only guaranteed up to some stopping time depending on $x$. This leads to that the inverse mapping $\Psi_{s,t}$ of $\Phi_{s,t}$ may not be well-defined, as the equation satisfied by $\Psi_{s,t}(x)$ includes the term $\partial_x\Phi_{s,t}(\Psi_{s,t}(x))^{-1}$. However, we can prove that $\Psi_{s,t}(x)$ is well-defined up to a certain stopping time by means of the truncation technique and that $\Psi_{s,t}$ is indeed an inverse mapping of $\Phi_{s,t}$ by generalized It\^o's formula. To illustrate that the stopping time in general cannot take on the value $+\infty$, even under the global Lipschitz condition, we provide an example. Meanwhile, the example indicates that the globally homeomorphic property cannot be guaranteed at any time except the initial time. Specifically, if the stopping time is independent of $x$, then the globally homeomorphic property can be guaranteed up to the stopping time, but not at arbitrary times.

This paper is organized as follows. In section 2, we review and introduce some preliminaries. Section 3 is dedicated to proving the regularity of the solution with respect to the deterministic initial value. In section 4, we discuss the invertibility of the Jacobian matrix $\partial_x\Phi_{s,t}(x)$ for any $x$, and then prove the locally diffeomorphic property of the solution. Finally, we provide an example to complement our results.

\section{Preliminaries}
\setcounter{equation}{0}
Throughout the paper, we assume that $(\Omega,\cal{F},P)$ is a probability space,  $\R^d$ is the $d$-dimensional Euclidean space
with the Borel $\sigma$-field $\cal{B}(\R^d)$ and $\cal{P}_2(\R^d)$ is the set of all probability measures $\mu$ over $(\R^d,\cal{B}(\R^d))$
with finite second moments, i.e.\ $\int_{\R^d} |x|^2\mu(\rmd x)<\infty$.

\subsection{Differentiation in $\cal{P}_2(\R^d)$\label{2.1}}
In this subsection, we introduce the concept of the Lions derivative and a class of functions defined on $\R^d\times\cal{P}_2(\R^d)$. See
\cite[Section~6]{CP}, \cite{CD1} or \cite{BLPR} for more details. We assume that the probability space $(\Omega, \cal{F}, P)$ is rich enough. ``Rich enough" means that for any $\mu\in\cal{P}_2(\R^d)$ there exists a random variable $v\in L^2(\Omega,\cal{F},P;\R^d)$ such that $P_v=\mu$, i.e.\ $\cal{P}_2(\R^d)=\{P_v;v\in L^2(\Omega,\cal{F},P;\R^d)\}$.
\begin{de}\label{LD}
A function $f:\cal{P}_2(\R^d)\rightarrow\R$ is said to be \emph{Lions differentiable} at $P_{v_0}\in \cal{P}_2(\R^d)$ if $\widetilde{f}$ is Fr\'echet differentiable at $v_0$, where $\widetilde{f}$ is the canonical lift of $f$ from $\cal{P}_2(\R^d)$ to $L^2(\Omega,\cal{F},P;\R^d)$ satisfying $f(P_v)=\widetilde{f}(v)$ for all $v\in L^2(\Omega,\cal{F},P;\R^d)$.
\end{de}
We now explain the reasonability of Definition \ref{LD}. Since $\widetilde{f}$ is Fr\'echet differentiable at $v_0$, there exists a continuous linear functional $D\widetilde{f}(v_0)\in L(L^2(\Omega,\cal{F},P;\R^d);\R)$ such that
\begin{align}\label{T}
f(P_v)-f(P_{v_0})=\widetilde{f}(v)-\widetilde{f}(v_0)=D\widetilde{f}(v_0)(v-v_0)+o(|v-v_0|_{L^2}),
\end{align}
with $|v-v_0|_{L^2}\rightarrow 0$ for $v\in L^2(\Omega,\cal{F},P;\R^d)$. Since $L^2(\Omega,\cal{F},P;\R^d)$ is a Hilbert space, Riesz's representation theorem implies the existence of  a ($P$-$a.s.$) unique random variable $\theta_0\in L^2(\Omega,\cal{F},P;\R^d)$ satisfying $D\widetilde{f}(v_0)(v-v_0)= E[\theta_0\cdot(v-v_0)]$ for all $v\in L^2(\Omega,\cal{F},P;\R^d)$. Moreover, it is shown in \cite[Theorem 6.5]{CP} that there exists a deterministic measurable function $g\in L^{2}_{P_{v_0}}(\R^d,\R^d)$ such that $g(v_0)=\theta_0~P$-$a.s.$, where $g$ depends only on $P_{v_0}$(see \cite[Theorem 6.2]{CP} for details). We call $g=:\partial_{\mu}f(P_{v_0})$ the \textit{Lions derivative} of $f$ at $P_{v_0}$. It is worth noting that the definition of Lions derivatives is independent of the selection of the probability space $(\Omega, \cal{F}, P)$ and the representation $v_0$, which implies the reasonability of Definition~\ref{LD}. Furthermore, the Lions derivative $\partial_{\mu}f(P_{v_0})$ of $f$ at $P_{v_0}$ satisfies
\begin{align*}
f(P_v)-f(P_{v_0})=E[\partial_{\mu} f(P_{v_0},v_0)\cdot(v-v_0)]+o(|v-v_0|_{L^2}),~\hbox{as}~ |v-v_0|_{L^2}\rightarrow 0.
\end{align*}

We call that $f:\cal{P}_2(\R^d)\rightarrow\R$ is Lions differentiable throughout $\cal{P}_2(\R^d)$ if the Fr\'echet derivative of the canonical lift $\widetilde{f}:L^2(\Omega,\cal{F},P;\R^d)\rightarrow\R$ exists at every point in $L^2(\Omega,\cal{F},P;\R^d)$.

For second-order Lions derivatives of $f:\cal{P}_2(\R^d)\rightarrow\R$, we can directly apply the previous discussion to each component of $\partial_{\mu}f(\cdot,y)=\big(\partial^1_{\mu}f(\cdot,y),...,\partial^d_{\mu}f_d(\cdot,y)\big)$ for fixed $y\in\R^d$; see e.g.\ \cite{CM} for details. Similarly, we can define higher-order Lions derivatives. If the $n$-order Lions derivative of $f$ exists, for any multi-index $\alpha=(i_1,...,i_n)\in\{1,...d\}^n$, we denote the $\alpha$-component of the $n$-order Lions derivative by $\partial^\alpha_\mu f=\partial^{i_n}_\mu...\partial^{i_1}_\mu f:\cal{P}_2(\R^d)\times(\R^d)^n\rightarrow\R$.

Although $\partial_{\mu}f(P_v,\cdot)$ is defined $P_v$-$a.s.$, there exists a Lipschitz continuous modification of $\partial_\mu f(P_v ,\cdot)$; see
\cite[Lemma~3.3]{CD1} or \cite{BLPR} for details. Consequently, we can enhance the requirement of differentiability with respect to the second variable under suitable assumptions. It is worth noting that all notions related to Lions derivatives hold in the sense of `modification' in the following text. This enables us to introduce a class of functions defined on $\R^d\times\cal{P}_2(\R^d)$ that satisfy the following assumptions. See \cite{CM} for details.
\begin{ass}\label{con}
We say that $V= (V^1,...,V^d)\in\cal{C}_{b,Lip}^{1,1}(\R^d\times\cal{P}_2(\R^d);\R^d)$ if $\partial_\mu V$ and $\partial_x V$ exist and satisfy: there exists a constant $K>0$ such that
\begin{itemize}
  \item (boundedness) for all $(x,\mu,v)\in \R^d\times\cal{P}_2(\R^d)\times\R^d$,
   $$|\partial_x V(x,\mu)|+|\partial_\mu V(x,\mu,v)|\leq K,$$
  \item (Lipschitz continuity) for all $(x,\mu,v),(x',\mu',v')\in \R^d\times\cal{P}_2(\R^d)\times\R^d$,
    $$|\partial_x V(x,\mu)-\partial_x V(x',\mu')|\leq K(|x-x'|+W_2(\mu,\mu')),$$
    $$|\partial_\mu V(x,\mu,v)-\partial_\mu V(x',\mu',v')|\leq K(|x-x'|+W_2(\mu,\mu')+|v-v'|),$$
    where $W_2$ is the 2-Wasserstein metric.
\end{itemize}
\end{ass}
We say that $V\in\cal{C}_{b,Lip}^{n,n}(\R^d\times\cal{P}_2(\R^d);\R^d)$ if the following conditions are satisfied: for any $i\in\{1,...,d\}$, all multi-indices $\alpha\in\{1,...,d\}^k~(k=0,...,n),\gamma,\beta_1,...,\beta_{\#\alpha}\in\{(i_1,...,i_d);i_j\in\{0,...,n\}\}$ satisfying $\#\alpha+|\beta_1|+...+|\beta_{\#\alpha}|+|\gamma|\leq n$, the derivatives
$$\partial_x^\gamma\partial_{\boldsymbol{v}}^{\boldsymbol{\beta}}\partial_\mu^\alpha V^i(x,\mu,\boldsymbol{v}),
~\partial_{\boldsymbol{v}}^{\boldsymbol{\beta}}\partial_\mu^\alpha\partial_x^\gamma V^i(x,\mu,\boldsymbol{v}),
~\partial_{\boldsymbol{v}}^{\boldsymbol{\beta}}\partial_x^\gamma\partial_\mu^\alpha V^i(x,\mu,\boldsymbol{v})$$
exist and each of these derivatives is bounded and Lipschitz, where $\#\alpha$ is the number of components of $\alpha$, $\boldsymbol{\beta}=(\beta_1,...,\beta_{\#\alpha})$, $\boldsymbol{v}=(v_1,...,v_{\#\alpha})\in(\R^d)^{\#\alpha}$ and $|\cdot|$ represents the sum of its components. When $\#\alpha=0$, we define $\boldsymbol{\beta}=0$. Notice that $\partial_{\boldsymbol{v}}^{\boldsymbol{\beta}}$ and $\partial_\mu^\alpha$ are not exchangeable.

Let us finish the discussion about the Lions derivative by introducing some notations. Let $(\widetilde{\Omega},\cal{\widetilde{F}},\widetilde{P})$ be a copy of the probability space $(\Omega,\cal{F},P)$.  For any random variable $v$ defined on $(\Omega, \cal{F}, P)$, we can denote its copy over $(\widetilde{\Omega},\cal{\widetilde{F}},\widetilde{P})$ by $\widetilde{v}$ satisfying $P_v=\widetilde{P}_{\widetilde{v}}$; see \cite{BLPR} for details. Further, we can extend the notion of copy from random variables to random fields, as a random field can be viewed as a random variable valued in the corresponding function space. Let $\{X_{\lambda},\lambda\in\Lambda\}$ be a random field, where $\Lambda$ is the unit ball of the $e$-dimensional Euclidean space $\R^e$. If $\widetilde{X}$ is a copy of $X$, any finite-dimensional distribution of $X$ coincides with that of $\widetilde{X}$, i.e.\
$P_{(X_{\lambda_1},...,X_{\lambda_n})}=\widetilde{P}_{(\widetilde{X}_{\lambda_1},...,\widetilde{X}_{\lambda_n})}$, for $\lambda_1,...,
\lambda_n\in \Lambda$ and $ n \in \mathbb{N}$.

\subsection{Stratonovich Symmetric McKean-Vlasov SDEs}\label{2.2}
Let $W_t=(W_t^1,...,W^{d'}_t),~t\in\R^+$ be a $d'$-dimensional Brownian motion defined on $(\Omega,\{\cal{F}_t\}_{t\geq0},P)$, where
$\cal{F}_t:=\cal{F}(W_s(0\leq s\leq t)),(t\geq0)$ is a $\sigma$-algebra generated by the history of the Brownian motion up to (and including) time $t$.

We now consider the following Stratonovich symmetric McKean-Vlasov SDE on $\R^d$ with coefficients $V_k~(k=0,...,d')$ belonging to $\cal{C}_{b,Lip}^{n,n}(\R^d\times\cal{P}_2(\R^d);\R^d)~(n\geq2)$
\begin{align}\label{ssde}
\circ \rmd X_t&=\sum_{k=0}^{d'}V_k(X_t,P_{X_t})\circ \rmd W^k_t,
\end{align}
where $\circ \rmd W^k_t$ with $k\neq 0$ represents the Stratonovitch symmetric integral, $\circ\rmd W_t^0=\rmd t$ and $P_{X_t}$ is the law of $X_t$. If there exists a continuous $\R^d$-valued $\cal{F}_t$-adapted process $X_t,~t \geq s$ satisfying the corresponding integral equation
\begin{align*}
X_t=X_s+\sum_{k=0}^{d'}\int_s^tV_k(X_r,P_{X_r})\circ \rmd W^k_r,
\end{align*}
for all $t\geq s$ almost surely, the process $X_t,~t \geq s$ is called \textit{a solution of Stratonovich symmetric McKean-Vlasov SDE} \eqref{ssde} starting from $X_s$ at time $s$.

The solution of the equation is \textit{(path-wise) unique} if two solutions $X_t$ and $X'_t , t \geq s$ of equation~\eqref{ssde} with the same initial condition satisfy $X_t = X'_t$ for all $t \geq s$ almost surely.

Since the Stratonovitch symmetric integral can be expressed by the sum of the corresponding It\^o's integral and some quadratic covariation (see \cite[Proposition 2.4.1]{K}), the existence and the uniqueness of the solution of equation \eqref{ssde} can be reduced to that of the following It\^o McKean-Vlasov SDE
\begin{align}\label{isde}
X_t=X_s+\sum_{k=0}^{d'}\int_s^tV_k(X_r,P_{X_r}) \rmd W^k_r+\int_s^tV'_0(X_r,P_{X_r}) \rmd r,
\end{align}
where $$V'_0(x,\mu)=V_0(x,\mu)+\frac{1}{2}\sum_{k=1}^{d'}\partial_x V_k(x,\mu)V_k(x,\mu).$$
In fact, due to $V_0', V_1,...,V_{d'}$ belonging to $\mathcal{C}_{b,Lip}^{1,1}(\mathbb{R}^d\times\mathcal{P}_2(\mathbb{R}^d),\mathbb{R}^d)$, the existence and uniqueness of the solution of equation \eqref{isde} is guaranteed by \cite[Theorem 2.1]{W}. The unique solution is denoted by $(X_t)_{t\geq s}$ and the martingale part of $V_k(X_t,P_{X_t})$ can be expressed by It\^o's formula (see e.g.\ \cite[Theorem 7.1]{BLPR}) as
\begin{align}\label{ito}
%V_k(X_t,P_{X_t})=V_k(s,X_s,P_{X_s})+\int_s^t...\rmd r+
\sum_{k=1}^{d'}\int_s^t\partial_x
V_k(X_r,P_{X_r})V_k(X_r,P_{X_r})\rmd W^k_r.
\end{align}
Therefore, $V_k(X_t,P_{X_t})$ is a continuous semi-martingale, and the quadratic covariation of $V_k(X_t,P_{X_t})$ and $W^k_t$ on the interval $[s, t]$ is computed as
\begin{align*}
\Big\langle V_k(X_t,P_{X_t}),W^k_t\Big\rangle_{s,t}&=\Bigr\langle\int_s^t\partial_x V_k(X_r,P_{X_r})V_k(X_r,P_{X_r})\rmd
W^k_r,W^k_t\Bigr\rangle_{s,t}\\
&=\int_s^t\partial_x V_k(X_r,P_{X_r})V_k(X_r,P_{X_r})\rmd r.
\end{align*}
Combining \cite[Proposition 2.4.1]{K}, this implies that $\int_s^tV_k(X_r,P_{X_r})\circ \rmd W^k_r$ is well defined and satisfies
\begin{align*}
\int_s^tV_k(X_r,P_{X_r})\circ \rmd W^k_r&=\int_s^tV_k(X_r,P_{X_r})\rmd W^k_r+\frac{1}{2}\langle
V_k(X_t,P_{X_t}),W^k_t\rangle_{s,t}\\
&=\int_s^tV_k(X_r,P_{X_r})\rmd W^k_r+\frac{1}{2}\int_s^t\partial_x V_k(X_r,P_{X_r})V_k(X_r,P_{X_r})\rmd r.
\end{align*}
It can be deduced that $(X_t)_{t\geq s}$ is the solution of Stratonovitch symmetric McKean-Vlasov SDE \eqref{ssde}. Conversely, if $(X_t)_{t\geq s}$ is a solution of Stratonovitch symmetric McKean-Vlasov SDE \eqref{ssde}, it is a continuous semi-martingale and satisfies It\^o McKean-Vlasov SDE \eqref{isde}, which implies that the uniqueness of the solution of \eqref{ssde} since equation~\eqref{isde} has a path-wise unique solution.

\subsection{Generalized It\^o's Formula}\label{2.3}

In this subsection, we recall generalized It\^o's formula in the form of Stratonovich symmetric integrals (see \cite[Theorem 2.4.2]{K}), which plays an important role in establishing the locally diffeomorphic property of McKean-Vlasov SDEs. For more details regarding the content of this subsection, see \cite[Chapter 2]{K}.

Let $\mathbb{T}=[0,T]$ and $p>d\vee2$. We call $\phi\in\mathbf{L}^{lip,p}_\mathbb{T}(\R^d)$ if $\phi(x,t)$ is a $d$-dimensional predictable process with spatial parameter $x\in\R^d$ and satisfies
$$\int_{\R^d}\int_0^T|\phi(\omega,x,r)|^2\rmd r\rmd x<\infty,~a.s.,$$
$$\sup_{x\in\R^d}E\int_0^T|\phi(x,r)|^p\rmd r<\infty,$$
\begin{align}\label{lipp}
E\int_0^T|\phi(x,r)-\phi(x',r)|^p\rmd r<C_p|x-x'|^p,
\end{align}
for any $x,x'\in\R^d$ with a positive constant $C_p$.  Moreover, $\phi(x,t)$ is said to belong to $\mathbf{L}^{n+lip,p}_\mathbb{T}(\R^d)$ if it is $n$-times continuously differentiable with respect to $x$ for any $t$ almost surely, and for any $d$-dimensional index $\mathbf{i}$ satisfying $|\mathbf{i}|\leq n$, the derivative $\partial_{x}^{\mathbf{i}}\phi(x,r)$ satisfies \eqref{lipp} by replacing $\phi$ with $\partial_{x}^{\mathbf{i}}\phi$.

Assume that $(f^k(x,t),~k=1,...,d')$ is a $d'$-dimensional predictable processes with spatial parameter $x$ and satisfies
$$f^k(x,t)=\int _s^t(f^k_1(x, r),\rmd W_r)+\int _s^t f^k_2(x,r) \rmd r,$$
where $f^k_1(x, r)$ is a $d\times d'$-matrix-valued random variable and all of its column vectors and $f^k_2(x,r)$ belong to $\mathbf{L}^{3+lip,p}_\mathbb{T}(\R^d)$. Consider the following stochastic process
\begin{align*}
F(x, t) = F(x, s)+\sum_{k=0}^{d'}\int_s^tf^k(x,r) \circ \rmd W^k_r.
\end{align*}
\begin{prop}[Generalized It\^o's formula \cite{K}]\label{GIF}
For any $d$-dimensional continuous semi-martingale $X_t, t\in [0,T]$, the following formula holds
\begin{align}
F(X_t, t) = F(X_s, s) +\sum_{k=0}^{d'}\int_s^tf^k(X_r,r) \circ \rmd W^k_r+\sum_{k=1}^{d'}\int_s^t\frac{\partial F}{\partial x_k}(X_r,r)\circ \rmd X^k_r,
\end{align}
for any $0\leq s < t\leq T$, where $\circ \rmd X^k_r$ with $k\neq0$ represents the Stratonovitch symmetric integral by the continuous semi-martingale $X^k_r$.
\end{prop}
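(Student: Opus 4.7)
The plan is to reduce the Stratonovich identity to the more familiar It\^o-Wentzell formula for random fields and then convert the It\^o corrections back into Stratonovich corrections; the Stratonovich form is structurally cleaner because it obeys the ordinary chain rule, but the actual calculation naturally proceeds through It\^o calculus. To begin, I would convert every Stratonovich integral appearing in the claim into its It\^o counterpart plus half the corresponding quadratic covariation via \cite[Proposition~2.4.1]{K}. Using the assumed semimartingale decomposition $f^k(x,t)=\int_s^t(f^k_1(x,r),\rmd W_r)+\int_s^t f^k_2(x,r)\,\rmd r$, the covariation $\langle f^k(X_\cdot,\cdot),W^j\rangle$ reduces to a Lebesgue integral of the columns of $f^k_1$ evaluated along $X$, and similarly $\langle\partial_{x_k}F(X_\cdot,\cdot),X^j\rangle$ unfolds by the chain rule for quadratic covariations.

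Next, I would establish the It\^o form of the formula by a dyadic-partition argument. Choose $s=t_0<\cdots<t_n=t$ and telescope
\begin{equation*}
F(X_t,t)-F(X_s,s)=\sum_i[F(X_{t_{i+1}},t_{i+1})-F(X_{t_i},t_{i+1})]+\sum_i[F(X_{t_i},t_{i+1})-F(X_{t_i},t_i)].
\end{equation*}
The first sum is handled by second-order Taylor expansion in $x$ with $X_{t_i}$ frozen, producing the classical contributions $\sum_k\int\partial_{x_k}F\,\rmd X^k+\tfrac12\sum_{k,j}\int\partial^2_{x_kx_j}F\,\rmd\langle X^k,X^j\rangle$. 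The second sum, after inserting the random-field definition of $F$ at the frozen spatial point $X_{t_i}$, yields $\sum_k\int f^k(X_r,r)\,\rmd W^k_r$ plus a cross-correction $\sum_{k,j}\int\partial_{x_k}f^j\,\rmd\langle X^k,W^j\rangle$ arising from evaluating the stochastic-integral random field $f^k$ at a random argument rather than at a deterministic one. Feeding these It\^o identities back into the Stratonovich-It\^o conversion of the first step, the $\tfrac12\partial^2F$ term cancels exactly against the Stratonovich correction of $\int\partial_{x_k}F\circ\rmd X^k$, and the cross-covariation cancels the Stratonovich correction of $\int f^k\circ\rmd W^k$, leaving precisely the claimed identity.

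The main obstacle, I expect, is justifying the convergence of the Riemann-type sums in step two to the correct stochastic integrals. One needs uniform (on compacts) control over $\partial^\alpha_x f^k$ and $\partial^\alpha_x F$ up to second order, because these random fields must be evaluated at the random points $X_{t_i}$. This is exactly why the hypothesis requires $f^k_1,f^k_2\in\mathbf{L}^{3+lip,p}_\mathbb{T}(\R^d)$ with $p>d\vee2$: a Kolmogorov-continuity upgrade converts the $L^p$-Lipschitz-in-space bounds into an almost-sure spatial modulus of continuity strong enough to replace deterministic Riemann sums such as $\sum_i\partial_{x_k}F(x,t_{i+1})(X^k_{t_{i+1}}-X^k_{t_i})$ by their evaluations at $x=X_{t_i}$ and still pass to the limit in probability, which is the delicate analytic step underlying the whole argument.
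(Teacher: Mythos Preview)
The paper does not prove this proposition at all: it is quoted verbatim from Kunita's monograph \cite[Theorem~2.4.2]{K} as a preliminary tool (see the opening sentence of Subsection~\ref{2.3}), so there is no in-paper argument to compare your proposal against. Your outline---convert Stratonovich to It\^o via \cite[Proposition~2.4.1]{K}, establish the It\^o--Wentzell identity by a partition/telescoping argument with a second-order Taylor expansion in the space variable and a random-field increment in the time variable, then verify that the It\^o correction terms cancel against the Stratonovich corrections---is essentially the standard route and is in the spirit of Kunita's own treatment in \cite[Chapter~2]{K}. If you want to match the paper, simply cite \cite[Theorem~2.4.2]{K} and omit the proof; if you wish to include a proof, your sketch is sound, with the caveat that the convergence of the discrete sums evaluated at random points $X_{t_i}$ is indeed the technical crux and is where the $\mathbf{L}^{3+lip,p}_{\mathbb{T}}$ hypothesis with $p>d\vee2$ enters through Kolmogorov's criterion, exactly as you anticipate.
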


\section{Regularity of Solutions with respect to the Deterministic Initial Value}
In this section, we discuss the regularity of solutions with respect to the deterministic initial value in terms of It\^o McKean-Vlasov SDEs. We will utilize the result in this section to establish analogous conclusions regarding Stratonovich symmetric McKean-Vlasov SDEs in the next section.

For given $T>0$, consider the following It\^o McKean-Vlasov SDE
\begin{align}\label{isde1}
X_t=x+\sum_{k=0}^{d'}\int_s^t V_k(X_r,P_{X_r}) \rmd W_r^k,
\end{align}
where $0\leq s\leq t\leq T$. If $V_0,...,V_d\in \cal{C}_{b,Lip}^{1,1}(\R^d\times\cal{P}_2(\R^d),\R^d)$, the existence and uniqueness of solutions to \eqref{isde1} can be guaranteed. We denote the unique solution by $\Phi_{s,t}(x)$. Under the global Lipschitz condition, Bahlali et al.\ \cite{BM2} investigated the stability of McKean-Vlasov SDEs with respect to the initial value in the sense of $L^2$. Notice that our result here is stronger than that of \cite{BM2}.

\begin{thm}[Continuity]\label{3.1}
Suppose that $V_0,...,V_d$ belong to $\cal{C}_{b,Lip}^{1,1}(\R^d\times\cal{P}_2(\R^d),\R^d)$. Then there exists a modification of the solution $\Phi_{s,t}(x)$ that is continuous with respect to $(s,t,x)$ almost surely.
\end{thm}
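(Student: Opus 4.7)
The plan is to apply Kolmogorov's continuity criterion on the three-parameter random field $(s,t,x) \mapsto \Phi_{s,t}(x)$, parameterised over the $(d+2)$-dimensional domain $\{(s,t)\in[0,T]^2:s\le t\}\times\R^d$. Accordingly, for some $p$ to be chosen large (in particular $p>2d+4$), I would prove a joint estimate of the form
\begin{align*}
E\bigl|\Phi_{s,t}(x)-\Phi_{s',t'}(x')\bigr|^p
\le C_p\bigl(|x-x'|^p+|t-t'|^{p/2}+|s-s'|^{p/2}\bigr),
\end{align*}
uniformly for $(s,t,x),(s',t',x')$ in a bounded set, and then invoke Kolmogorov's criterion to produce a jointly continuous modification. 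It suffices to establish the three one-parameter bounds separately.

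For the $x$-increment, fix $s,t$ and write the difference $\Phi_{s,t}(x)-\Phi_{s,t}(x')$ using \eqref{isde1}. Applying the Burkholder--Davis--Gundy and Hölder inequalities, the Lipschitz property of the $V_k$ in both arguments, and the elementary bound $W_2(P_{\Phi_{s,r}(x)},P_{\Phi_{s,r}(x')})\le \bigl(E|\Phi_{s,r}(x)-\Phi_{s,r}(x')|^2\bigr)^{1/2}\le \bigl(E|\Phi_{s,r}(x)-\Phi_{s,r}(x')|^p\bigr)^{1/p}$, one reduces to a Grönwall inequality in $t$ that yields $E|\Phi_{s,t}(x)-\Phi_{s,t}(x')|^p\le C_p|x-x'|^p$. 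The $t$-increment (fixing $s$ and $x$) is routine: the difference is a stochastic plus Lebesgue integral on $[t,t']$ of bounded integrands, so BDG gives a $|t-t'|^{p/2}$ bound directly.

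The main obstacle is the estimate in the \emph{initial time} $s$, precisely because McKean--Vlasov SDEs have no flow property: $\Phi_{s',t}(x)$ is not obtained from $\Phi_{s,t}(x)$ by restarting, and the law $P_{\Phi_{s,r}(x)}$ entering the coefficients depends nontrivially on $s$. For $s\le s'\le t$, I would write
\begin{align*}
\Phi_{s,t}(x)-\Phi_{s',t}(x)
&=\sum_{k=0}^{d'}\int_s^{s'}V_k(\Phi_{s,r}(x),P_{\Phi_{s,r}(x)})\,\rmd W_r^k\\
&\quad+\sum_{k=0}^{d'}\int_{s'}^{t}\bigl[V_k(\Phi_{s,r}(x),P_{\Phi_{s,r}(x)})-V_k(\Phi_{s',r}(x),P_{\Phi_{s',r}(x)})\bigr]\rmd W_r^k.
\end{align*}
Boundedness of the $V_k$ controls the first sum by $C_p|s-s'|^{p/2}$ via BDG. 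For the second sum, BDG, Hölder, and the Lipschitz property (together with the same $W_2$-versus-$L^p$ bound as above, applied to the difference $\Phi_{s,r}(x)-\Phi_{s',r}(x)$ after a symmetrisation) reduce matters to a Grönwall inequality on $[s',t]$, yielding $E|\Phi_{s,t}(x)-\Phi_{s',t}(x)|^p\le C_p|s-s'|^{p/2}$.

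Putting the three estimates together by the triangle inequality gives the joint bound, after which Kolmogorov's criterion (in the form valid for random fields with parameter in $\R^{d+2}$) delivers a modification that is continuous in $(s,t,x)$ almost surely. The only delicate point is to keep track of how the $W_2$-dependence of the coefficients on the law forces the Grönwall arguments to be closed on the same $L^p$-norm one is trying to estimate; this is handled uniformly by the bound $W_2(P_Y,P_{Y'})\le (E|Y-Y'|^p)^{1/p}$ for $p\ge 2$.
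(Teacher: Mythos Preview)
Your approach is essentially the paper's: both apply Kolmogorov's criterion after establishing a joint $L^p$ estimate, splitting via the triangle inequality into $s$-, $t$-, and $x$-increments and handling each by BDG, the Lipschitz property, and Gr\"onwall. The decomposition you write for the $s$-increment is exactly the one the paper uses (the paper phrases it as $\Phi_{s',t}(x)=\Phi_{s',s}(x)+\sum_k\int_s^t V_k(\Phi_{s',r}(x),P_{\Phi_{s',r}(x)})\,\rmd W_r^k$, which amounts to the same thing).

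The one slip is your repeated appeal to ``boundedness of the $V_k$'': under $V_k\in\cal{C}_{b,Lip}^{1,1}$ only the \emph{derivatives} $\partial_x V_k$ and $\partial_\mu V_k$ are bounded, so $V_k$ itself has merely linear growth,
\[
|V_k(y,\mu)|\le |V_k(0,\delta_0)|+K\bigl(|y|+W_2(\mu,\delta_0)\bigr).
\]
Consequently neither the $t$-increment integrand nor the term $\int_s^{s'}V_k(\Phi_{s,r}(x),P_{\Phi_{s,r}(x)})\,\rmd W_r^k$ is controlled by a constant; you need the a~priori moment bound $E|\Phi_{s,r}(x)|^p\le C(1+|x|^p)$ (the paper's \eqref{B}) to close these estimates, which then produces constants of the form $C(1+|x|^p)$ in front of $|t-t'|^{p/2}$ and $|s-s'|^{p/2}$. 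This is harmless since you already restrict to bounded $(s,t,x)$-sets before invoking Kolmogorov, and it is exactly what the paper does.
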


\begin{proof}
Through simple calculations, we obtain that for any $0\leq s\leq t\leq T$ and $p\geq2$, there exists a constant $C$ depending on $p,T,d',K$ such that
\begin{align}\label{B}
E|\Phi_{s,t}(x)|^p\leq C(1+|x|^p).
\end{align}
According to Kolmogorov's continuity criterion (see e.g.\ \cite[Theorem 1.8.1]{K} for details), it suffices to verify that for any $p\geq 2$, there exists a positive constant $C'=C(p,T,d',K)$ such that
\begin{align}\label{lp}
E|\Phi_{s,t}(x)-\Phi_{s',t'}(x')|^p\leq C'((1+|x|^p)|s-s'|^{\frac{p}{2}}+(1+|x'|^p)|t-t'|^{\frac{p}{2}}+|x-x'|^p)
\end{align}
for any $(s,t,x)$ and $(s',t',x')$ with $0\leq s < t\leq T$ and $0\leq s'< t'\leq T$. Assume without loss of generality that $s'<s<t<t'$. By the triangle inequality,
\begin{align}\label{e}
&E|\Phi_{s,t}(x)-\Phi_{s',t'}(x')|^p\\
&\leq 3^{p-1}\big(E|\Phi_{s,t}(x)-\Phi_{s',t}(x)|^p+E|\Phi_{s',t}(x)-\Phi_{s',t}(x')|^p+E|\Phi_{s',t}(x')-\Phi_{s',t'}(x')|^p\big).\nonumber
\end{align}

We now estimate the first term of \eqref{e} and note the fact
$$\Phi_{s',t}(x)=\Phi_{s',s}(x)+\sum_{k=0}^{d'}\int_s^tV_k(\Phi_{s',r}(x),P_{\Phi_{s',r}(x)})\rmd W_r^k,$$
for $s'<s<t$. By BDG's inequality, H\"older's inequality, the global Lipschitz condition and \eqref{B}, we obtain
\begin{align*}
E|\Phi_{s,t}(x)-\Phi_{s',t}(x)|^p
\leq C(p,d')E|\Phi_{s',s}(x)-x|^p+C(K,d',p,T)\int_s^tE|\Phi_{s,r}(x)-\Phi_{s',r}(x)|^p\rmd r.
\end{align*}
Subsequently, Gronwall's inequality yields
\begin{align*}
&E|\Phi_{s,t}(x)-\Phi_{s',t}(x)|^p\\
&\leq C'(K,d',p,T)E|\Phi_{s',s}(x)-x|^p\\
&\leq C'(K,d',p,T)C(T,p)|s-s'|^{\frac{p}{2}-1}\sum_{k=0}^{d'}\Big(\int_{s'}^s \big|V_k(0,\delta_0)\big|^p\rmd r\\
&\qquad\qquad\qquad\qquad+\int_{s'}^s E\big|V_k(\Phi_{s',r}(x),P_{\Phi_{s',r}(x)})-V_k(0,\delta_0)\big|^p\rmd
r\Big)\\
&\leq C''(K,d',p,T)|s-s'|^{\frac{p}{2}-1}\Big(|s-s'|+\int_{s'}^sE|\Phi_{s',r}(x)|^p\rmd r\Big)\\
&\leq C'''(K,d',p,T)(1+|x|^p)|s'-s|^{\frac{p}{2}},
\end{align*}
where the third inequality utilizes the global Lipschitz continuity of coefficients and the fourth inequality takes advantage of \eqref{B}.

By a similar approach to the last term in \eqref{e}, we obtain
\begin{align*}
E|\Phi_{s',t}(x')-\Phi_{s',t'}(x')|^p\leq C(d',T,p,K)(1+|x'|^p)\big|t'-t\big|^{\frac{p}{2}}.
\end{align*}

Once again, applying BDG's inequality, H\"older's inequality, the global Lipschitz condition and \eqref{B}, we can estimate the second term of \eqref{e} as
\begin{align*}
&E|\Phi_{s',t}(x)-\Phi_{s',t}(x')|^p\\
&\leq
C(p,d')|x-x'|^p+C(p,d')\sum_{k=0}^{d'}E\Big|\int_{s'}^{t}V_k(\Phi_{s',r}(x),P_{\Phi_{s',r}(x)})-V_k(\Phi_{s',r}(x'),P_{\Phi_{s',r}(x')})
\rmd W_r^k\Big|^p\\
&\leq C(p,d')|x-x'|^p+C(T,p,d')\sum_{k=0}^{d'}\int_{s'}^{t}E\Big|
V_k(\Phi_{s',r}(x),P_{\Phi_{s',r}(x)})-V_k(\Phi_{s',r}(x'),P_{\Phi_{s',r}(x')})\Big|^p\rmd r\\
&\leq C(p,d')|x-x'|^p+C(T,p,K,d')\int_{s'}^{t}E\Big| \Phi_{s',r}(x)-\Phi_{s',r}(x')\Big|^p\rmd r.
\end{align*}
Moreover, it follows from Gronwall's inequality that
$$E|\Phi_{s',t}(x)-\Phi_{s',t}(x')|^p\leq C'(T,p,K,d')|x-x'|^p.$$

Combining the three estimates, \eqref{lp} holds. The proof is complete.
\end{proof}

\begin{rem}\label{LC}
\begin{enumerate}
  \item Meanwhile, \eqref{lp} implies the continuity of the law of $\Phi_{s,t}(x)$ with respect to $(s,t,x)$ in the weak topology of $\cal{P}_2(\R^d)$, which can be proven by the following estimate:
  \begin{align*}
  W_2(P_{\Phi_{s,t}(x)},P_{\Phi_{s',t'}(x')})&\leq\big(E|\Phi_{s,t}(x)-\Phi_{s',t'}(x')|^2\big)^{\frac{1}{2}}\\
  &\leq C_{p}\big(E|\Phi_{s,t}(x)-\Phi_{s',t'}(x')|^p\big)^{\frac{1}{p}}\\
  &\leq C'(T,p,d',K)\big((1+|x|^p)|s-s'|^{\frac{p}{2}}+(1+|x'|^p)|t-t'|^{\frac{p}{2}}+|x-x'|^p\big)^{\frac{1}{p}}\\
  &\leq C''(T,p,d',K)\big((1+|x|)|s-s'|^{\frac{1}{2}}+(1+|x'|)|t-t'|^{\frac{1}{2}}+|x-x'|\big).
  \end{align*}
  \item When applying Kolmogorov's continuity criterion to prove the continuity of the unique solution $Y_{s,t}(x)$ of the classical SDE with respect to $(s,t,x)$, Kunita \cite{K} greatly simplified calculations by the property $Y_{s',t}(x')=Y_{s,t}(Y_{s',s}(x'))$ and the inequality
\begin{align*}
&E[|Y_{s,t}(x)-Y_{s',t}(x')|^p]\\
&=E\bigr[\bigr|Y_{s,t}(x)-Y_{s,t}(Y_{s',s}(x'))\bigr|^p\bigr]\\
&\leq E\Big[E\bigr[|Y_{s,t}(x)-Y_{s,t}(z)\bigr|^p\bigr]|_{z=Y_{s',s}(x')}\Big]
\end{align*}
with $s'<s<t$. However, this approach is not applicable to McKean-Vlasov SDEs because of the lack of the flow property. Fortunately, we can estimate the $p$-moment $E[|\Phi_{s,t}(x)-\Phi_{s',t}(x')|^p]$ using
$$\Phi_{s',t}(x')=\Phi_{s',s}(x')+\sum_{k=0}^{d'}\int_s^tV_k(\Phi_{s',r}(x'),P_{\Phi_{s',r}(x')})\rmd W_r^k.$$
Moreover, a similar situation arises regarding the continuity of the derivatives with respect to $(s,t,x)$.
\end{enumerate}

\end{rem}

We will next show the existence of the derivatives of all order of $\Phi_{s,t}(x)$ with respect to $x$ and the continuity of  $\partial^{\mathbf{i}}_x\Phi_{s,t}(x)$ with respect to $(s,t,x)$ for any $d$-dimensional index $\mathbf{i}$.

\begin{thm}\label{2}
Suppose that $V_0,...,V_d$  belong to $\cal{C}_{b,Lip}^{1,1}(\R^d\times\cal{P}_2(\R^d),\R^d)$ and $\Phi_{s,t}(x)$ is the unique solution of
It\^o McKean-Vlasov SDE \eqref{isde1} starting from $x$ at $s$. Then there exists a modification of this solution, still denoted by $\Phi_{s,t}(x)$,  such that for all $s,t\in [0, T]$, the mapping $x\mapsto\Phi_{s,t}(x)$ is $P$-$a.s.$ differentiable, and its derivative $\partial_x\Phi_{s,t}(x)=\Big(\frac{\partial\Phi_{s,t}}{\partial x_i}(x)\Big)$ is continuous with respect to $(s,t,x)$ almost surely and satisfies the following SDE
\begin{align*}
\partial_x\Phi_{s,t}(x)=I+\sum_{k=0}^{d'}&\int_s^t \partial_x V_k(\Phi_{s,r}(x),P_{\Phi_{s,r}(x)})\cdot \partial_x\Phi_{s,r}(x)\\
&+\widetilde{E}\bigr[\partial_{\mu}V_k(\Phi_{s,r}(x),P_{\Phi_{s,r}(x)},\widetilde{\Phi}_{s,r}(x))\cdot
\partial_x\widetilde{\Phi}_{s,r}(x)\bigr]  \rmd W_r^k,
\end{align*}
where $I=I_{d\times d}$ is the unit matrix. Further, if $V_0,...,V_d$ belong to $\cal{C}_{b,Lip}^{n,n}(\R^d\times\cal{P}_2(\R^d),\R^d)$, there exists a modification of $\Phi_{s,t}(x)$ such that it is $n$-times continuously differentiable with respect to $x$ and the derivatives of all order are continuous in $(s,t,x)$ almost surely.
\end{thm}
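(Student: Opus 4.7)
The plan is to first introduce and solve the candidate linear SDE for the putative derivative, then to identify this solution with the $L^p$-limit of difference quotients of $\Phi_{s,t}$, and finally to apply Kolmogorov's continuity criterion to obtain a jointly continuous modification. The extension to higher-order derivatives will be by induction, since the SDE satisfied by $\partial_x^{\mathbf{i}}\Phi$ turns out to be linear in the top-order derivative with coefficients built out of already-differentiated quantities.

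First, I would fix $x$ and consider the linear McKean-Vlasov SDE
\begin{align*}
J_{s,t}(x)=I+\sum_{k=0}^{d'}\int_s^t \Bigl(\partial_x V_k(\Phi_{s,r}(x),P_{\Phi_{s,r}(x)})\,J_{s,r}(x)
+\widetilde{E}\bigl[\partial_\mu V_k(\Phi_{s,r}(x),P_{\Phi_{s,r}(x)},\widetilde{\Phi}_{s,r}(x))\,\widetilde{J}_{s,r}(x)\bigr]\Bigr)\rmd W_r^k.
\end{align*}
Since $\partial_x V_k$ and $\partial_\mu V_k$ are bounded and Lipschitz by Assumption~\ref{con}, this is a linear SDE whose coefficients are bounded by $2K$ times $\|J\|$ (note that $|\widetilde{E}[\partial_\mu V_k\cdot\widetilde{J}]|\le K(E|J|^2)^{1/2}$); standard Picard iteration in $L^p$ yields existence, uniqueness, and moment bounds $E|J_{s,t}(x)|^p\le C(p,T,K,d')$ for every $p\ge 2$.

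Second, I would identify $J_{s,t}(x)e_i$ with $\partial_{x_i}\Phi_{s,t}(x)$ by proving that the difference quotient $\eta^{h,i}_{s,t}(x):=h^{-1}(\Phi_{s,t}(x+he_i)-\Phi_{s,t}(x))$ converges to $J_{s,t}(x)e_i$ in $L^p$ as $h\to 0$, uniformly in $(s,t)\in[0,T]^2$. For each $k$, I would expand
\begin{align*}
V_k(\Phi_{s,r}(x+he_i),P_{\Phi_{s,r}(x+he_i)})-V_k(\Phi_{s,r}(x),P_{\Phi_{s,r}(x)})
\end{align*}
by first-order Taylor expansion: in the spatial argument via $\int_0^1 \partial_x V_k(\cdots)\,\rmd\lambda$ applied to $\Phi_{s,r}(x+he_i)-\Phi_{s,r}(x)$, and in the measure argument by the fundamental theorem of calculus along the interpolating family $P_{\lambda\Phi_{s,r}(x+he_i)+(1-\lambda)\Phi_{s,r}(x)}$, giving an $\widetilde{E}\bigl[\partial_\mu V_k\cdot(\widetilde{\Phi}_{s,r}(x+he_i)-\widetilde{\Phi}_{s,r}(x))\bigr]$ term plus a $o(1)$ remainder controlled by the Lipschitz property of $\partial_\mu V_k$ and Theorem~\ref{3.1}. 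Subtracting the two SDEs, $\eta^{h,i}-Je_i$ satisfies a linear SDE in which the ``source'' terms vanish in $L^p$ as $h\to 0$, while the ``feedback'' part is $\partial_x V_k\cdot(\eta^{h,i}-Je_i)+\widetilde{E}[\partial_\mu V_k\cdot(\widetilde{\eta}^{h,i}-\widetilde{J}e_i)]$; BDG, H\"older, and Gronwall then give $\sup_{s,t}E|\eta^{h,i}_{s,t}(x)-J_{s,t}(x)e_i|^p\to 0$.

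Third, to upgrade almost-sure differentiability at each $x$ to the existence of a jointly continuous version of $\partial_x\Phi_{s,t}(x)$, I would obtain the Kolmogorov-type estimate
\begin{align*}
E|J_{s,t}(x)-J_{s',t'}(x')|^p\le C\bigl(|s-s'|^{p/2}+|t-t'|^{p/2}+|x-x'|^p\bigr)
\end{align*}
for every $p\ge 2$, by splitting into three pieces and estimating each as in the proof of Theorem~\ref{3.1}: the spatial difference uses the Lipschitz continuity of $\partial_x V_k$ and $\partial_\mu V_k$ combined with Theorem~\ref{3.1} and the $L^p$-bound on $J$; the time differences use boundedness of the coefficients together with the representation that begins $J_{s',t}(x)=J_{s',s}(x)+\int_s^t\cdots$, exactly as in Remark~\ref{LC}(ii) (the lack of flow property is no obstacle here for the same reason). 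Kolmogorov's criterion then yields a modification of $J$ continuous in $(s,t,x)$, and replacing $\Phi_{s,t}(x)$ by the corresponding modification makes $\partial_x\Phi_{s,t}(x)=J_{s,t}(x)$ jointly continuous. Finally, for the $\mathcal{C}^{n,n}_{b,\mathrm{Lip}}$ case I would induct on $n$: differentiating the SDE for $\partial_x^{\mathbf{j}}\Phi_{s,t}(x)$ with $|\mathbf{j}|=n-1$ formally produces a linear McKean-Vlasov SDE for $\partial_x^{\mathbf{i}}\Phi_{s,t}(x)$ whose lower-order inputs (derivatives of $\Phi$ of order $<n$, compositions with $V_k$, $\partial_x V_k$, $\partial_\mu V_k$, and their higher derivatives) are continuous in $(s,t,x)$ by induction, and the same three-step scheme—linear SDE, difference quotient, Kolmogorov—applies verbatim.

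The main obstacle is the second step: justifying the Lions-derivative Taylor expansion of $V_k(\cdot,P_{\cdot})$ at the McKean-Vlasov level and showing that the resulting remainder goes to $0$ in $L^p$ uniformly in $(s,t)$. Here one must use that $\partial_\mu V_k(x,\mu,\cdot)$ admits a Lipschitz modification (as recalled in Section~\ref{2.1}) so that the interpolation along $P_{\lambda\Phi_{s,r}(x+he_i)+(1-\lambda)\Phi_{s,r}(x)}$ is meaningful, and that $W_2$-continuity of the law (Remark~\ref{LC}(i)) together with moment bounds on $\Phi$ controls the error; the rest is Gronwall applied to the coupled system for $\eta^{h,i}$ and $\widetilde{\eta}^{h,i}$.
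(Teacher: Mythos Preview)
Your sketch is essentially correct and follows the same broad strategy as the paper (difference quotients, Lions-derivative expansion along an interpolation, Gronwall, Kolmogorov), but the order of the argument differs in a way worth noting.

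The paper does \emph{not} first postulate the candidate SDE for $J$ and then prove $L^p$-convergence of the difference quotients to its solution. Instead it works directly with the difference quotients $\eta^{\epsilon,l}_{s,t}(x)$, proves the two-parameter estimate
\[
E\sup_{t}\bigl|\eta^{\epsilon,l}_{s,t}(x)-\eta^{\epsilon',l}_{s,t}(x')\bigr|^p\le C\bigl(|x-x'|^p+|\epsilon-\epsilon'|^p\bigr),
\]
and applies Kolmogorov's criterion to $\eta$ viewed as a process in $(\epsilon,x)$ with values in the Banach space of continuous paths. This yields a modification continuous at $\epsilon=0$, hence the a.s.\ existence of $\partial_l\Phi_{s,t}(x)$ uniformly in $t$. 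Only afterwards does the paper identify the SDE, and it does so not by comparing with a candidate but by checking that $V_k(\Phi_{s,\cdot}(\cdot),P_{\Phi_{s,\cdot}(\cdot)})\in\mathbf{L}^{1+Lip,p}_{[s,T]}$ and invoking Kunita's interchangeability-of-$\partial_x$-and-stochastic-integral lemma.

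Your route avoids that interchangeability lemma, which is a genuine simplification. On the other hand, Step~2 as written gives only $L^p$-convergence of $\eta^{h,i}$ to $Je_i$, and your opening line of Step~3 (``to upgrade almost-sure differentiability at each $x$\ldots'') assumes something you have not yet established. To close this gap you would still need an estimate of the form $E|\eta^{h,i}-\eta^{h',i}|^p\le C|h-h'|^p$ and a Kolmogorov argument in $h$---which is precisely what the paper does in its Step~1. So the two arguments ultimately use the same ingredients; you have just redistributed them.
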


\begin{proof}
{\bf Step 1, we will demonstrate that the mapping $x\mapsto\Phi_{s,t}(x)$ is $P$-$a.s.$ differentiable.} Set
$$\eta_{s,t}^{\epsilon,l}(x)=\frac{1}{\epsilon}\{\Phi_{s,t}(x+\epsilon e_l)-\Phi_{s,t}(x)\},$$ which satisfies
\begin{align*}
\eta_{s,t}^{\epsilon,l}(x)=e_l + \sum_{k=0}^{d'}&\int_s^t\frac{1}{\epsilon}\cdot[V_k(\Phi_{s,r}(x + \epsilon e_l),P_{\Phi_{s,r}(x +
\epsilon e_l)}) - V_k(\Phi_{s,r}(x),P_{\Phi_{s,r}(x)})]\rmd W_r^k\\
=e_l + \sum_{k=0}^{d'}\int_s^t\int_0^1&\frac {d}{\rmd \tau}\big[V_k(\Phi_{s,r}(x) + \tau (\Phi_{s,r}(x + \epsilon e_l) -
\Phi_{s,r}(x)),P_{\Phi_{s,r}(x) + \tau (\Phi_{s,r}(x + \epsilon e_l) - \Phi_{s,r}(x))})\big]\rmd \tau \frac{1}{\epsilon}\rmd W_r^k \\
=e_l + \sum_{k=0}^{d'}\int_s^t\int_0^1&\Big[\partial_x V_k(\Phi_{s,r}(x) + \tau (\Phi_{s,r}(x + \epsilon e_l) - \Phi_{s,r}(x)),P_{\Phi_{s,r}(x)
+ \tau (\Phi_{s,r}(x + \epsilon e_l) - \Phi_{s,r}(x))})\cdot \eta_{s,r}^{\epsilon,l}(x)\Big]\rmd \tau \rmd W_r^k\\
+\sum_{k=0}^{d'}\int_s^t\int_0^1&\widetilde{E}\Big[\partial_\mu V_k\big(\Phi_{s,r}(x) + \tau(\Phi_{s,r}(x + \epsilon e_l) -
\Phi_{s,r}(x)),P_{\Phi_{s,r}(x) + \tau (\Phi_{s,r}(x + \epsilon e_l) - \Phi_{s,r}(x))},\\
&\qquad\qquad\qquad\qquad\qquad\qquad\widetilde{\Phi}_{s,r}(x) + \tau (\widetilde{\Phi}_{s,r}(x + \epsilon e_l) - \widetilde{\Phi}_{s,r}(x))\bigr)\cdot
\widetilde{\eta}_{s,r}^{\epsilon,l}(x)\Big]\rmd \tau \rmd W_r^k.
\end{align*}

We first prove the boundedness of $E\|\eta_{s,\cdot}^{\epsilon,l}(x)\|_t^p$ for $p\geq2$ and $\|X_{\cdot}\|_t=\sup_{r\in[s,t]}|X_r|$. By the martingale inequality, BDG's inequality, H\"older's inequality and the boundedness in Assumption \ref{con}, we obtain
\begin{align*}
&E\|\eta_{s,\cdot}^{\epsilon,l}(x)\|_t^p\\
&\leq C(p,d')\Bigr(1+\sum_{k=0}^{d'}\Big(E\max_{s\leq u\leq t}\Big|\int_s^u\int_0^1\partial_x V_k\cdot \eta_{s,r}^{\epsilon,l}(x)\rmd \tau \rmd W_r^k\Big|^p
+E\max_{s\leq u\leq t}\Big|\int_s^u\int_0^1\widetilde{E}[\partial_\mu V_k\cdot\widetilde{\eta}_{s,r}^{\epsilon,l}(x)]\rmd \tau \rmd W_r^k\Big|^p\Big)\Bigr)\\
%&\leq C(p,d')+C(p,d',T)\sum_{k=0}^{d'}\Big(E|\int_s^t\int_0^1\partial_x V_k\cdot\eta_{s,r}^{\epsilon,l}(x)\rmd \tau \rmd W_r^k|^p+E|\int_s^t\int_0^1\widetilde{E}[\partial_\mu V_k\cdot\widetilde{\eta}_{s,r}^{\epsilon,l}(x)]\rmd \tau \rmd W_r^k|^p\Big)\\
&\leq C(p,d')+C'(p,d',T)\sum_{k=0}^{d'}\Big(E\int_s^t\int_0^1|\partial_x V_k\cdot\eta_{s,r}^{\epsilon,l}(x)|^p+|\widetilde{E}[\partial_\mu V_k\cdot \widetilde{\eta}_{s,r}^{\epsilon,l}(x)]|^p\rmd\tau\rmd r\Big)\\
&\leq C(p,d')+C'(p,d',T,K)\Big(E\int_s^t\int_0^1|\eta_{s,r}^{\epsilon,l}(x)|^p+\widetilde{E}|\widetilde{\eta}_{s,r}^{\epsilon,l}(x)|^p\rmd \tau\rmd r\Big)\\
&\leq C(p,d')+C'(p,T,d',K)\int_s^tE\|\eta_{s,\cdot}^{\epsilon,l}(x)\|_r^p\rmd r.
\end{align*}
Gronwall's inequality yields
\begin{align}\label{bdd}
E\|\eta_{s,\cdot}^{\epsilon,l}(x)\|_t^p\leq \widetilde{C}
\end{align}
for a suitable constant $\widetilde{C}$ independent of $s,t,x,\epsilon,l$.

For ease of computation, we set
\begin{align*}
\alpha_k(s,r,\tau,x,\epsilon):=&\partial_x V_k(\Phi_{s,r}(x)+\tau \epsilon\eta_{s,r}^{\epsilon,l}(x),P_{\Phi_{s,r}(x)+\tau \epsilon\eta_{s,r}^{\epsilon,l}(x)}),\\
\beta_k(s,r,\tau,x,\epsilon):=&\partial_\mu V_k(\Phi_{s,r}(x)+\tau \epsilon\eta_{s,r}^{\epsilon,l}(x),P_{\Phi_{s,r}(x)+\tau
\epsilon\eta_{s,r}^{\epsilon,l}(x)},\widetilde{\Phi}_{s,r}(x)+\tau \epsilon\widetilde{\eta}_{s,r}^{\epsilon,l}(x)).
\end{align*}
Notice that the two terms can be bounded by the constant $K$.
Now, we will calculate the $p$-moment of $\eta_{s,t}^{\epsilon,l}(x)-\eta_{s,t}^{\epsilon',l}(x')$. Note that $\eta_{s,t}^{\epsilon,l}(x)-\eta_{s,t}^{\epsilon',l}(x')$ is the sum of the following terms
\begin{align*}
&\sum_{k=0}^{d'}\int_s^t\int_0^1\alpha_k(s,r,\tau,x,\epsilon)\cdot \eta_{s,r}^{\epsilon,l}(x)-\alpha_k(s,r,\tau,x',\epsilon')\cdot
\eta_{s,r}^{\epsilon',l}(x')\rmd \tau \rmd W_r^k=:\sum_{k=0}^{d'}\int_s^t(*_1) \rmd W_r^k,
\end{align*}
\begin{align*}
&\sum_{k=0}^{d'}\int_s^t\int_0^1\widetilde{E}[\beta_k(s,r,\tau,x,\epsilon)\cdot \widetilde{\eta}_{s,r}^{\epsilon,l}(x)]-\widetilde{E}[\beta_k(s,r,\tau,x',\epsilon')\cdot \widetilde{\eta}_{s,r}^{\epsilon',l}(x')]\rmd \tau\rmd W_r^k=:\sum_{k=0}^{d'}\int_s^t(*_2) \rmd W_r^k.
\end{align*}
We first deal with the first term. By Assumption \ref{con}, we deduce
\begin{align*}
&|(*_1)|=\Big|\int_0^1\alpha_k(s,r,\tau,x,\epsilon)\cdot \eta_{s,r}^{\epsilon,l}(x)-\alpha_k(s,r,\tau,x',\epsilon')\cdot
\eta_{s,r}^{\epsilon',l}(x')\rmd \tau\Big|\\
&\leq\int_0^1|\alpha_k(s,r,\tau,x,\epsilon)\cdot(\eta_{s,r}^{\epsilon,l}(x)-\eta_{s,r}^{\epsilon',l}(x'))|+|\alpha_k(s,r,\tau,x,\epsilon)
-\alpha_k(s,r,\tau,x',\epsilon')||\eta_{s,r}^{\epsilon',l}(x')|\rmd \tau\\
&\leq K|\eta_{s,r}^{\epsilon,l}(x)-\eta_{s,r}^{\epsilon',l}(x')|+K|\eta_{s,r}^{\epsilon',l}(x')|\\
&\qquad\qquad\qquad\times \int_0^1|(1-\tau)(\Phi_{s,r}(x)-\Phi_{s,r}(x'))+\tau( \Phi_{s,r}(x+\epsilon e_l)-\Phi_{s,r}(x'+\epsilon's e_l))|\\
&\qquad\qquad\qquad\qquad\qquad\qquad\qquad\qquad\qquad+W_2(P_{\Phi_{s,r}(x)+\tau\epsilon\eta_{s,r}^{\epsilon,l}(x)}, P_{\Phi_{s,r}(x')+\tau \epsilon'\eta_{s,r}^{\epsilon',l}(x')})\rmd\tau\\
&\leq K|\eta_{s,r}^{\epsilon,l}(x)-\eta_{s,r}^{\epsilon',l}(x')|+K|\eta_{s,r}^{\epsilon',l}(x')|\\
&\qquad\qquad\qquad\times\Bigr(|\Phi_{s,r}(x)-\Phi_{s,r}(x')|+|\Phi_{s,r}(x+
\epsilon e_l)-\Phi_{s,r}(x'+\epsilon's e_l)|\\
&\qquad\qquad\qquad\qquad+\bigr(E|\Phi_{s,r}(x)-\Phi_{s,r}(x')|^2\bigr)^{\frac{1}{2}}+\bigr(E|\Phi_{s,r}(x+\epsilon e_l)-\Phi_{s,r}(x'+\epsilon'e_l)|^2\bigr)^{\frac{1}{2}}\Bigr).
\end{align*}
Then it follows from H\"older's inequality and \eqref{bdd} that
\begin{align*}
E|(*_1)|^p&\leq C(p,K,\widetilde{C})
E|\eta_{s,r}^{\epsilon,l}(x)-\eta_{s,r}^{\epsilon',l}(x')|^p+C(p,K,\widetilde{C})\Bigr(E|\Phi_{s,r}(x)-\Phi_{s,r}(x')|^{2p}\Bigr)^{\frac{1}{2}}\\
&\qquad\qquad+C(p,K,\widetilde{C})\Bigr(E|\Phi_{s,r}(x+\epsilon e_l)-\Phi_{s,r}(x'+\epsilon's e_l)|^{2p}\Bigr)^{\frac{1}{2}}.
\end{align*}
Since $E\bigr[|\Phi_{s,r}(x)-\Phi_{s,r}(x')|^{2p}\bigr]\leq C|x-x'|^{2p}$, we have
\begin{align*}
&E\Big|\sum_{k=0}^{d'}\int_{s}^t(*_1)\rmd W_k^r\Big|^p\\
&\leq C(p,d',K,T)\int_{s}^t E|(*_1)|^p \rmd r\\
&\leq C(p,d',K,T)\int_{s}^tE|\eta_{s,r}^{\epsilon,l}(x)-\eta_{s,r}^{\epsilon',l}(x')|^p+\Bigr(E|\Phi_{s,r}(x)-\Phi_{s,r}(x')|^{2p}\Bigr)^{\frac{1}{2}}\\
&\qquad\qquad\qquad\qquad+\Bigr(E|\Phi_{s,r}(x+\epsilon e_l)-\Phi_{s,r}(x'+\epsilon'e_l)|^{2p}\Bigr)^{\frac{1}{2}}\rmd r\\
&\leq C(p,T,d',K)\int_{s}^tE\|\eta_{s,\cdot}^{\epsilon,l}(x)-\eta_{s,\cdot}^{\epsilon',l}(x')\|_r^p\rmd r+C(p,T,d',K)\Big(|x-x'|^p+|\epsilon-\epsilon'|^p\Big).
\end{align*}
After same calculations on $(*_2)$, we obtain
\begin{align*}
&E\Big|\sum_{k=0}^{d'}\int_{s}^t(*_2)\rmd W_k^r\Big|^p\\
&\leq C(p,T,d',K)\int_{s}^tE\|\eta_{s,\cdot}^{\epsilon,l}(x)-\eta_{s,\cdot}^{\epsilon',l}(x')\|_r^p\rmd r+C(p,T,d',K)\Big(|x-x'|^p+|\epsilon-\epsilon'|^p\Big).
\end{align*}
Therefore,  it follows that
\begin{align*}
&E\|\eta_{s,\cdot}^{\epsilon,l}(x)-\eta_{s,\cdot}^{\epsilon',l}(x')\|_t^p\\
&\leq C(p,d')\sum_{k=0}^{d'}E\Big|\int_{s}^t(*_1)+(*_2)\rmd W_k^r\Big|^p\\
&\leq C(p,d',T,K)\int_{s}^tE\|\eta_{s,\cdot}^{\epsilon,l}(x)-\eta_{s,\cdot}^{\epsilon',l}(x')\|_r^p\rmd r+C(p,d',T,K)\Big(|x-x'|^p+|\epsilon-\epsilon'|^p\Big).
\end{align*}
Gronwall's inequality implies that, for any $s\geq0$ and $p\geq2$, there exists a positive constant $C$ depending on $p,T,d',K$ such that for any $x,x'\in \R^d$ and $0<|\epsilon|,|\epsilon'|<1$,
\begin{align}\label{lip}
E\|\eta_{s,\cdot}^{\epsilon,l}(x)-\eta_{s,\cdot}^{\epsilon',l}(x')\|_T^p\leq C\Big(|x-x'|^p+|\epsilon-\epsilon'|^p\Big).
\end{align}
Moreover, we have
\begin{align}\label{lipw}
\widetilde{E}|\widetilde{\eta}_{s,t}^{\epsilon,l}(x)-\widetilde{\eta}_{s,t}^{\epsilon',l}(x')|^p\leq C\Bigr(|x-x'|^p+|\epsilon-\epsilon'|^p\Bigr)
\end{align}
for any $t\in[s,T].$

Set $$B:=\{X_{\cdot}|\R^d\hbox{-valued continuous function of}~t\in [s,T]\},$$
which is a Banach space with the norm $\|X_{\cdot}\|=\max_{t\in [s,T]}|X_t|$. Then $\eta_{s,\cdot}^{\epsilon,l}(x)$ can be viewed as a stochastic process valued in $B$. According to Kolmogorov's continuity criterion and \eqref{lip}, it holds that for any $t>s$, $\eta_{s,t}^{\epsilon,l}(x)$ has a continuous version at $\epsilon=0$, still denoted by $\eta_{s,t}^{\epsilon,l}(x)$.  Moreover, $\lim_{\epsilon\rightarrow0}\eta_{s,t}^{\epsilon,l}(x)$ exists uniformly with respect to $t$, denoted by $\partial_l\Phi_{s,t}(x)$, which is continuous in $x$. This implies that for $s < t$, $\Phi_{s,t}(x)$ is continuously differentiable with respect to $x$ almost surely. Similarly, for any $t>s$, $\widetilde{\eta}_{s,t}^{\epsilon,l}(x)$ has a continuous version at $\epsilon=0$ and
$\lim_{\epsilon\rightarrow0}\widetilde{\eta}_{s,t}^{\epsilon,l}(x)=:\partial_l\widetilde{\Phi}_{s,t}(x)$ exists. The proof of Step 1 is complete. Furthermore, we can obtain the following conclusion.

 Since both $\widetilde{\eta}_{s,t}^{\epsilon,l}(x)$ and $\eta_{s,t}^{\epsilon,l}(x)$ are continuous at $\epsilon=0$ and $\widetilde{\Phi}_{s,t}(x)$ is a copy of $\Phi_{s,t}(x)$ in the sense of `random fields' (see Subsection~\ref{2.1}),
\begin{align}\label{p}
E|\partial_l\Phi_{s,t}(x)-\partial_l\Phi_{s,t}(x')|^p=\widetilde{E}|\partial_l\widetilde{\Phi}_{s,t}(x)-\partial_l\widetilde{\Phi}_{s,t}(x')|^p
\end{align}
holds for any $s<t$ and $x,x'\in\R^d$. By \eqref{bdd}, we obtain
\begin{align}\label{bdd1}
\sup_{t\in [s,T]}E|\partial_l\Phi_{s,t}(x)|^P\leq E\|\partial_l\Phi_{s,\cdot}(x)\|_T^P\leq \widetilde{C}.
\end{align}
By \eqref{lip}, it follows that $\partial_l\Phi_{s,t}(x)$ is Lipschitz, i.e.
\begin{align}\label{lip1}
\sup_{t\in [s,T]}E|\partial_l\Phi_{s,t}(x)-\partial_l\Phi_{s,t}(x')|^p\leq E\|\partial_l\Phi_{s,\cdot}(x)-\partial_l\Phi_{s,\cdot}(x')\|_T^p\leq
C|x-x'|^p.
\end{align}

{\bf Step 2: We are going to search for It\^o's SDE satisfied by $\partial_x\Phi_{s,t}(x)$.}
For given $M\in \mathbb{N}$, let $B(0,M)$ denote the open ball centered at $0$ with radius $M$. To derive the equation satisfied by $\partial_l\Phi_{s,t}(x)$, it suffices to verify that $V_k(\Phi_{s,\cdot}(\cdot),P_{\Phi_{s,\cdot}(\cdot)})\in \mathbf{L}^{1+Lip,p}_{[s,T]}(B(0,M))$, which guarantees the interchangeability of It\^o's integral and the derivative $\partial_l$ according to \cite[Proposition 2.3.1]{K}. More precisely, we need to demonstrate that there exists a constant $C>0$ such that for any $x,x'\in B(0,M)$,
$$\int_{B(0,M)}\int_s^T|V_k(\Phi_{s,r}(x),P_{\Phi_{s,r}(x)})|^2\rmd r\rmd x<\infty,~a.s.,$$
$$\sup_{x\in B(0,M)}E\int_s^T|V_k(\Phi_{s,r}(x),P_{\Phi_{s,r}(x)})|^p\rmd r<\infty,$$
\begin{align}\label{111}
E\int_s^T\big|\partial_x^{\textbf{i}}\big[V_k(\Phi_{s,r}(x),P_{\Phi_{s,r}(x)})\big]-\partial_x^{\textbf{i}}\big[V_k(\Phi_{s,r}(x'),P_{\Phi_{s,r}(x')})\big]\big|^p\rmd r&\leq C|x-x'|^p,
\end{align}
where $\textbf{i}$ is the $d$-dimensional index with $|\textbf{i}|\leq1.$ In fact, the first inequality holds because of the Lipschitz continuity of $V_k$, Theorem \ref{3.1} and Remark \ref{LC}. Meanwhile, we obtain the second inequality by \eqref{B} and the triangle inequality with the addition of $V_k(0,\delta_0)$. The Lipschitz continuity of $V_k$ and estimate~\eqref{lp} imply \eqref{111} when $\textbf{i}$ is the zero vector.

 For $\textbf{i}=e_i~(i=1,...,d)$,
\begin{align*}
&\partial_x^{\textbf{i}}\big[V_k(\Phi_{s,r}(x),P_{\Phi_{s,r}(x)})\big]\\
&=\partial_x V_k(\Phi_{s,r}(x),P_{\Phi_{s,r}(x)})\frac{\partial\Phi_{s,r}}{\partial x_i}(x)
+\widetilde{E}[\partial_{\mu}V_k(\Phi_{s,r}(x),P_{\Phi_{s,r}(x)},\widetilde{\Phi}_{s,r}(x))
\frac{\partial\widetilde{\Phi}_{s,r}}{\partial x_i}(x)].
\end{align*}
For ease of computation, we define
\begin{align*}
a_k(s,r,x)=&\partial_x V_k(\Phi_{s,r}(x),P_{\Phi_{s,r}(x)}),\\
b_k(s,r,x)=&\partial_\mu V_k(\Phi_{s,r}(x),P_{\Phi_{s,r}(x)},\widetilde{\Phi}_{s,r}(x)).
\end{align*}
After some calculations we have
\begin{align*}
&E\int_s^T\big|\partial_x^{\textbf{i}}\big[V_k(\Phi_{s,r}(x),P_{\Phi_{s,r}(x)})\big]-\partial_x^{\textbf{i}}\big[V_k(\Phi_{s,r}(x'),P_{\Phi_{s,r}(x')})\big]\big|^p\rmd r\\
&\leq 2^pE\int_s^T\Big|a_k(s,r,x)\frac{\partial\Phi_{s,r}}{\partial x_i}(x)-a_k(s,r,x')\frac{\partial\Phi_{s,r}}{\partial x_i}(x')\Big|^p\rmd r\\
&\quad+2^pE\int_s^T\Big|\widetilde{E}\Big[b_k(s,r,x)\frac{\partial\widetilde{\Phi}_{s,r}}{\partial x_i}(x)\Big]-
\widetilde{E}\Big[b_k(s,r,x')\frac{\partial\widetilde{\Phi}_{s,r}}{\partial x_i}(x')\Big]\Big|^p\rmd r.
\end{align*}
Since the second term is similar to the first term, we only estimate the first term. It follows from Assumption \ref{con}, \eqref{lp1} and \eqref{BB} that
\begin{align*}
&E\int_s^T\Big|a_k(s,r,x)\frac{\partial\Phi_{s,r}}{\partial x_i}(x)-a_k(s,r,x')\frac{\partial\Phi_{s,r}}{\partial x_i}(x')\Big|^p\rmd r\\
&\leq 2^pE\int_s^T|a_k(s,r,x)|^p\Big|\frac{\partial\Phi_{s,r}}{\partial x_i}(x)-\frac{\partial\Phi_{s,r}}{\partial x_i}(x')\Big|^p+|a_k(s,r,x)-a_k(s,r,x')|^p\Big|\frac{\partial\Phi_{s,r}}{\partial x_i}(x')\Big|^p\rmd r\\
&\leq 2^p\int_s^TK^p\Big(E\Big|\frac{\partial\Phi_{s,r}}{\partial x_i}(x)-\frac{\partial\Phi_{s,r}}{\partial x_i}(x')\Big|^{2p}\Big)^{\frac{1}{2}}+\Big(E|a_k(s,r,x)-a_k(s,r,x')|^{2p}\Big)^{\frac{1}{2}}\Big(E\Big|\frac{\partial\Phi_{s,r}}{\partial x_i}(x')\Big|^{2p}\Big)^{\frac{1}{2}}\rmd r\\
&\leq C(p,T,K,d')\int_s^T\Big(E\Big|\frac{\partial\Phi_{s,r}}{\partial x_i}(x)-\frac{\partial\Phi_{s,r}}{\partial x_i}(x')\Big|^{2p}\Big)^{\frac{1}{2}}+\Big(E|a_k(s,r,x)-a_k(s,r,x')|^{2p}\Big)^{\frac{1}{2}}\rmd r\\
&\leq C'(p,T,K,d')\int_s^T\Big(E\Big|\frac{\partial\Phi_{s,r}}{\partial x_i}(x)-\frac{\partial\Phi_{s,r}}{\partial x_i}(x')\Big|^{2p}\Big)^{\frac{1}{2}}+\Big(E\big|\partial_x\Phi_{s,r}(x)-\partial_x\Phi_{s,r}(x')\big|^{2p}\Big)^{\frac{1}{2}}\rmd r\\
&\leq C''(p,T,K,d')|x-x'|^p.
\end{align*}
This implies that \eqref{111} holds for $\textbf{i}=e_i~(i=1,...,d)$. So far, we have proven that $V_k(\Phi_{s,\cdot}(\cdot),P_{\Phi_{s,\cdot}(\cdot)})$ belongs to $\mathbf{L}^{1+Lip,p}_{[s,T]}(B(0,M))$ with $k\neq0$.

When $k=0$, the interchangeability of Riemann integrals and the derivative $\partial_l$ can be guaranteed under the condition $V_0\in\cal{C}_{b,Lip}^{1,1}(\R^d\times\cal{P}_2(\R^d),\R^d)$.

Consequently, $\partial_x\Phi_{s,t}(x)=(\partial_1\Phi_{s,t}(x),...,\partial_d\Phi_{s,t}(x))$ satisfies
\begin{align*}
\partial_x\Phi_{s,t}(x)=I+\sum_{k=0}^{d'}&\int_s^t \partial_x V_k(\Phi_{s,r}(x),P_{\Phi_{s,r}(x)})\cdot \partial_x\Phi_{s,r}(x)\\
&+\widetilde{E}\bigr[\partial_{\mu}V_k(\Phi_{s,r}(x),P_{\Phi_{s,r}(x)},\widetilde{\Phi}_{s,r}(x))\cdot
\partial_x\widetilde{\Phi}_{s,r}(x)\bigr]  \rmd W_r^k.
\end{align*}

{\bf Step 3, we will prove that $\partial_x\Phi_{s,t}(x)$ is continuous in $(s,t,x)$ almost surely.} The proof is the same as that of Theorem 3.1. By a simple calculation, we obtain
\begin{align}\label{BB}
E|\partial_x\Phi_{s,t}(x)|^p\leq C'
\end{align}
for a appropriate constant $C'$ depending on $p,T,d',K$. According to Kolmogorov's continuity criterion, it suffices to demonstrate that for any $p\geq2$ and $M>0$, there exists a positive constant $C=C(p,T,d',K,M)$ such that
\begin{align}\label{lp1}
E|\partial_x\Phi_{s,t}(x)-\partial_x\Phi_{s',t'}(x')|^p\leq C(|s-s'|^{\frac{p}{2}}+|t-t'|^{\frac{p}{2}}+|x-x'|^p)
\end{align}
for all $(s,t,x),~(s',t',x')$ with $0\leq s < t\leq T,~0\leq s' < t'\leq T$ and $\max\{|x|,|x'|\}<M$. Assume without loss of generality that $0\leq s'<s<t<t'\leq T$.  To prove \eqref{lp1}, we conclude by the triangle inequality
\begin{align}\label{ee}
&E[|\partial_x\Phi_{s,t}(x)-\partial_x\Phi_{s',t'}(x')|^p]\\
&\leq 3^{p-1}\big(E[|\partial_x\Phi_{s,t}(x)-\partial_x\Phi_{s,t'}(x)|^p]+
E[|\partial_x\Phi_{s,t'}(x)-\partial_x\Phi_{s,t'}(x')|^p]+E[|\partial_x\Phi_{s,t'}(x')-\partial_x\Phi_{s',t'}(x')|^p]\big).\nonumber
\end{align}

 We now estimate the first term of \eqref{ee}. It follows from BDG's inequality, H\"older's inequality, Assumption \ref{con} and \eqref{BB} that
\begin{align}\label{9}
&E[|\partial_x\Phi_{s,t}(x)-\partial_x\Phi_{s,t'}(x)|^p]\\
&\leq C(p,d')\sum_{k=0}^{d'}E\Big|\int_t^{t'}\partial_x V_k\cdot\partial_x\Phi_{s,r}(x)+\widetilde{E}[\partial_\mu V_k\cdot\partial_x\widetilde{\Phi}_{s,r}(x)]\rmd
W_r^k\Big|^p\nonumber\\
&\leq C(T,p,d')\sum_{k=0}^{d'}|t-t'|^{\frac{p}{2}-1}\int_t^{t'}E\big|\partial_x V_k\cdot\partial_x\Phi_{s,r}(x)\big|^p+\big|\widetilde{E}[\partial_\mu
V_k\cdot\partial_x\widetilde{\Phi}_{s,r}(x)]\big|^p\rmd r\nonumber\\
&\leq C'(T,p,K,d')|t-t'|^{\frac{p}{2}-1}\int_t^{t'}E\big|\partial_x\Phi_{s,r}(x)\big|^p+E\big|\partial_x\Phi_{s,r}(x)\big|^p\rmd r\nonumber\\
&\leq C''(T,p,K,d')|t-t'|^{\frac{p}{2}}.\nonumber
\end{align}

Similarly, we estimate the second term and obtain
\begin{align*}
&E[|\partial_x\Phi_{s,t'}(x)-\partial_x\Phi_{s,t'}(x')|^p]-C(p,d')|x-x'|^p\\
&\leq  C(p,d')\sum_{k=0}^{d'}E\Big|\int_s^{t'}a_k(s,r,x)\partial_x\Phi_{s,r}(x)-a_k(s,r,x')\partial_x\Phi_{s,r}(x')\\
&\quad\qquad\qquad\qquad+\widetilde{E}[b_k(s,r,x)\partial_x\widetilde{\Phi}_{s,r}(x)-b_k(s,r,x')\partial_x\widetilde{\Phi}_{s,r}(x')]\rmd W_r^k\Big|^p\\
&\leq  C(T,p,d')\sum_{k=0}^{d'}\int_s^{t'}E\big|a_k(s,r,x)\partial_x\Phi_{s,r}(x)-a_k(s,r,x')\partial_x\Phi_{s,r}(x')\big|^p\\
&\quad\qquad\qquad\qquad+E\big|\widetilde{E}[b_k(s,r,x)\partial_x\widetilde{\Phi}_{s,r}(x)-b_k(s,r,x')\partial_x\widetilde{\Phi}_{s,r}(x')]\big|^p\rmd r\\
&\leq C'(T,p,d')\sum_{k=0}^{d'}\int_s^{t'}E\big|a_k(s,r,x)\big(\partial_x\Phi_{s,r}(x)-\partial_x\Phi_{s,r}(x')\big)\big|^p+E\big|\big(a_k(s,r,x)-a_k(s,r,x')\big)\partial_x\Phi_{s,r}(x')\big|^p\\
&\quad\qquad\qquad\qquad+E\widetilde{E}\big|b_k(s,r,x)\big(\partial_x\widetilde{\Phi}_{s,r}(x)-\partial_x\widetilde{\Phi}_{s,r}(x')\big)\big|^p+E\widetilde{E}\big|\big(b_k(s,r,x)-b_k(s,r,x')\big)\partial_x\widetilde{\Phi}_{s,r}(x')\big|^p\rmd r\\
&\leq  C(T,p,d',K)\sum_{k=0}^{d'}\int_s^{t'}E\big|\partial_x\Phi_{s,r}(x)-\partial_x\Phi_{s,r}(x')\big|^p
+\big(E\big|\Phi_{s,r}(x)-\Phi_{s,r}(x')\big|^{2p}\big)^{\frac{1}{2}}\big(E\big|\partial_x\Phi_{s,r}(x')\big|^{2p}\big)^{\frac{1}{2}}\rmd r\\
&\leq  C'(T,p,d',K)\sum_{k=0}^{d'}\int_s^{t'}E\big|\partial_x\Phi_{s,r}(x)-\partial_x\Phi_{s,r}(x')\big|^p\rmd r.
\end{align*}
Subsequently, Gronwall's inequality yields
\begin{align}\label{8}
E[|\partial_x\Phi_{s,t'}(x)-\partial_x\Phi_{s,t'}(x')|^p]\leq C(p,d',K,T)|x-x'|^p.
\end{align}

We next estimate the last term of \eqref{ee} and notice the fact
\begin{align*}
\partial_x\Phi_{s',t'}(x')
=\partial_x\Phi_{s',s}(x')+\sum_{k=0}^{d'}&\int_{s}^{t'} \partial_x V_k(\Phi_{s',r}(x'),P_{\Phi_{s',r}(x')})\cdot \partial_x\Phi_{s',r}(x')\\
&+\widetilde{E}\bigr[\partial_{\mu}V_k(\Phi_{s',r}(x'),P_{\Phi_{s',r}(x')},\widetilde{\Phi}_{s',r}(x'))\cdot
\partial_x\widetilde{\Phi}_{s',r}(x')\bigr]\rmd W_r^k
\end{align*}
for $s'<s<t'$. This implies
\begin{align}\label{10}
&E[|\partial_x\Phi_{s,t'}(x')-\partial_x\Phi_{s',t'}(x')|^p]\\
&\leq  C(p,d')E|\partial_x\Phi_{s',s}(x')-I|^p+ C(p,d')\sum_{k=0}^{d'}E\Big|\int_s^{t'}a_k(s,r,x')\partial_x\Phi_{s,r}(x')-a_k(s',r,x')\partial_x\Phi_{s',r}(x')\rmd W_r^k\Big|^p\nonumber\\
&\qquad+ C(p,d')\sum_{k=0}^{d'}E\Big|\int_s^{t'}\widetilde{E}[b_k(s,r,x')\partial_x\widetilde{\Phi}_{s,r}(x')-b_k(s',r,x')\partial_x\widetilde{\Phi}_{s',r}(x')]\rmd W_r^k\Big|^p.\nonumber
\end{align}
Since the third term of \eqref{10} is similar to the second term due to \eqref{p}, we focus on estimating the second term. It follows from Assumption \ref{con}, BDG's inequality and H\"older's inequality that
\begin{align*}
&\sum_{k=0}^{d'}E\Big|\int_s^{t'}a_k(s,r,x')\partial_x\Phi_{s,r}(x')-a_k(s',r,x')\partial_x\Phi_{s',r}(x')\rmd W_r^k\Big|^p\\
&\leq \sum_{k=0}^{d'}E\Big|\int_s^{t'}a_k(s,r,x')(\partial_x\Phi_{s,r}(x')-\partial_x\Phi_{s',r}(x'))+(a_k(s,r,x')-a_k(s',r,x'))\partial_x\Phi_{s',r}(x')\rmd W_r^k\Big|^p\\
&\leq C(T,p)\sum_{k=0}^{d'}\int_s^{t'}E\big|a_k(s,r,x')(\partial_x\Phi_{s,r}(x')-\partial_x\Phi_{s',r}(x'))\big|^p+E\big|\big(a_k(s,r,x')-a_k(s',r,x')\big)\partial_x\Phi_{s',r}(x')\big|^p\rmd r\\
&\leq
C(K,d',p,T)\int_s^{t'}E\big|\partial_x\Phi_{s,r}(x')-\partial_x\Phi_{s',r}(x')\big|^p+\big(E\big|\Phi_{s,r}(x')-\Phi_{s',r}(x')\big|^{2p}\big)^{\frac{1}{2}}\rmd
r\\
&\leq
C(K,d',p,T)\int_s^{t'}E\big|\partial_x\Phi_{s,r}(x')-\partial_x\Phi_{s',r}(x')\big|^p+C(K,d',p,T)(1+|x'|^{2p})^{\frac{1}{2}}|s'-s|^{\frac{p}{2}}\rmd
r\\
&\leq C(K,d',p,T)\int_s^{t'}E\big|\partial_x\Phi_{s,r}(x')-\partial_x\Phi_{s',r}(x')\big|^p\rmd r+C(K,d',p,T,M)|s'-s|^{\frac{p}{2}}.
\end{align*}
Consequently,
\begin{align*}
&E[|\partial_x\Phi_{s,t'}(x')-\partial_x\Phi_{s',t'}(x')|^p]\\
&\leq C(p,d')E|\partial_x\Phi_{s',s}(x')-I|^p+C(K,d',p,T,M)|s'-s|^{\frac{p}{2}}\\ &\qquad\qquad\qquad\qquad+C'(K,d',p,T)\int_s^{t'}E\big|\partial_x\Phi_{s,r}(x')-\partial_x\Phi_{s',r}(x')\big|^p\rmd r\\
&\leq C'(K,d',p,T,M)|s-s'|^{\frac{p}{2}}+ C''(K,d',p,T)\int_s^{t'}E\big|\partial_x\Phi_{s,r}(x')-\partial_x\Phi_{s',r}(x')\big|^p\rmd r.
\end{align*}
This implies by Gronwall's inequality that
\begin{align}\label{7}
E[|\partial_x\Phi_{s,t'}(x')-\partial_x\Phi_{s',t'}(x')|^p]\leq C''(K,d',p,T,M)|s'-s|^{\frac{p}{2}}.
\end{align}

Finally, \eqref{9}, \eqref{8} and \eqref{7} imply \eqref{lp1}. So far, we have proven that $\partial_x\Phi_{s,t}(x)$ is continuous in $(s,t,x)$ almost surely.

If $V_0,...,V_d\in \cal{C}_{b,Lip}^{n,n}(\R^d\times\cal{P}_2(\R^d),\R^d)$, we can similarly obtain that $\Phi_{s,t}(x)$ has a modification, ensuring its $n$-times differentiability with respect to $x$ and the continuity of the derivatives of all orders with respect to $(s,t,x)$.
\end{proof}

\section{The Locally Diffeomorphic Property of Solutions with respect to the Deterministic Initial Value}

In this section, we consider the following Stratonovich symmetric McKean-Vlasov SDE
\begin{align}\label{s-sde}
X_t=x+\sum_{k=0}^{d'}\int_s^t V_k(X_r,P_{X_r})\circ \rmd W_r^k,
\end{align}
where $0\leq s\leq t\leq T$ for given $T>0$. The existence and the uniqueness of the solution are guaranteed when $V_0,...,V_d$ belong to
$\cal{C}_{b,Lip}^{1,1}(\R^d\times\cal{P}_2(\R^d),\R^d)$ (see Subsection \ref{2.2}). We still denote the unique solution by $\Phi_{s,t}(x)$. Moreover, according to Theorem \ref{2}, if $V_0,...,V_d$ belong to $\cal{C}_{b,Lip}^{2,2}(\R^d\times\cal{P}_2(\R^d),\R^d)$, the mapping $x\mapsto\Phi_{s,t}(x)$ is $P$-$a.s.$ differentiable for all $s,t\in [0, T]$.

In contrast to classical SDEs, as discussed earlier, $\Phi_{s,t}(x)$ does not define a flow. Meanwhile, we will observe in this section that the globally homeomorphic property cannot be guaranteed at any time except the initial time $s$, even in cases where the solution exists globally. This highlights another distinction between McKean-Vlasov SDEs and classical SDEs.

Before discussing the locally homeomorphic property, we first demonstrate the regularity of solutions with respect to deterministic initial values for Stratonovich symmetric McKean-Vlasov SDEs, based on the results of It\^o McKean-Vlasov SDEs. Moreover, we will explore the invertibility of the Jacobian matrix $\partial_x\Phi_{s,t}(x)$.

 \begin{thm}\label{thm2}
Assume that coefficients $V_0,...,V_d$ of Stratonovich symmetric McKean-Vlasov SDE~\eqref{s-sde} belong to
$\cal{C}_{b,Lip}^{2,2}(\R^d\times\cal{P}_2(\R^d),\R^d)$. Then the Jacobian matrix $\partial_x\Phi_{s,t}(x)$ satisfies the Stratonovich symmetric McKean-Vlasov SDE
\begin{align*}
\partial_x\Phi_{s,t}(x)=I+\sum_{k=0}^{d'}&\int_s^t \partial_x V_k(\Phi_{s,r}(x),P_{\Phi_{s,r}(x)})\cdot \partial_x\Phi_{s,r}(x)\\
&\quad+\widetilde{E}\bigr[\partial_{\mu}V_k(\Phi_{s,r}(x),P_{\Phi_{s,r}(x)},\widetilde{\Phi}_{s,r}(x))\cdot
\partial_x\widetilde{\Phi}_{s,r}(x)\bigr] \circ\rmd W_r^k
\end{align*}
for any $s<t, x\in\R^d.$ Further, for any $x\in\R^d,~s\geq 0$, there exists a stopping time $\varrho(s,x)=:\varrho(x)>s$ such that $\partial_x\Phi_{s,t\wedge\varrho(x)}(x)$ is invertible for any $t\geq s$ almost surely.
\end{thm}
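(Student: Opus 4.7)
The plan has two stages: first derive the Stratonovich SDE for $\partial_x\Phi_{s,t}(x)$ by passing through the It\^o formulation and converting back, and second construct the stopping time $\varrho(x)$ using continuity of the Jacobian together with $\partial_x\Phi_{s,s}(x)=I$.

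For the first part, recall from Subsection~\ref{2.2} that the Stratonovich SDE \eqref{s-sde} is equivalent to the It\^o McKean-Vlasov SDE with drift $V_0'(x,\mu)=V_0(x,\mu)+\frac{1}{2}\sum_{k=1}^{d'}\partial_xV_k(x,\mu)V_k(x,\mu)$ and the same $V_1,\ldots,V_{d'}$ in the diffusion. The hypothesis $V_k\in\cal{C}_{b,Lip}^{2,2}$ ensures that $V_0'$ together with $V_1,\ldots,V_{d'}$ lies in $\cal{C}_{b,Lip}^{1,1}$, so Theorem~\ref{2} applies and yields the It\^o SDE
\begin{align*}
\partial_x\Phi_{s,t}(x)
&= I+\int_s^t\bigl[\partial_xV_0'\cdot\partial_x\Phi_{s,r}(x)+\widetilde E[\partial_\mu V_0'\cdot\partial_x\widetilde\Phi_{s,r}(x)]\bigr]\,\rmd r\\
&\qquad+\sum_{k=1}^{d'}\int_s^tG_k(r)\,\rmd W_r^k,
\end{align*}
with $G_k(r):=\partial_xV_k\cdot\partial_x\Phi_{s,r}(x)+\widetilde E[\partial_\mu V_k\cdot\partial_x\widetilde\Phi_{s,r}(x)]$ and coefficients evaluated at $(\Phi_{s,r}(x),P_{\Phi_{s,r}(x)})$ and its copy. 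Converting each It\^o integral via $\int G_k\,\rmd W_r^k=\int G_k\circ\rmd W_r^k-\frac{1}{2}\langle G_k,W^k\rangle$ reduces the claim to the algebraic identity
\begin{align*}
\tfrac{1}{2}\sum_{k=1}^{d'}\frac{\rmd}{\rmd r}\langle G_k,W^k\rangle_r
=[\partial_xV_0'-\partial_xV_0]\partial_x\Phi_{s,r}(x)+\widetilde E\bigl[[\partial_\mu V_0'-\partial_\mu V_0]\partial_x\widetilde\Phi_{s,r}(x)\bigr].
\end{align*}
The covariation is computed by the generalized It\^o formula (Proposition~\ref{GIF}) applied to $G_k$ as a function of $\Phi_{s,r}(x)$, $P_{\Phi_{s,r}(x)}$, $\partial_x\Phi_{s,r}(x)$ and their $\widetilde\Omega$-copies; since $\widetilde\Omega$-martingales have zero covariation with $W^k$, only the $\Omega$-dependence through $\Phi_{s,r}(x)$ and $\partial_x\Phi_{s,r}(x)$ contributes, producing the second-order mixed Lions derivatives $\partial_x\partial_\mu V_k$ (hence the need for $\cal{C}_{b,Lip}^{2,2}$). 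A direct Leibniz expansion of $\partial_x V_0'-\partial_x V_0=\frac{1}{2}\sum_k[\partial_x^2V_k\cdot V_k+(\partial_xV_k)^2]$ and the analogous formula for $\partial_\mu V_0'-\partial_\mu V_0$ confirms the identity.

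For the second part, $\partial_x\Phi_{s,s}(x)=I$, and Theorem~\ref{2} asserts that $t\mapsto\partial_x\Phi_{s,t}(x)$ is $\cal{F}_t$-adapted and almost surely continuous in $t$. Hence $t\mapsto\det\partial_x\Phi_{s,t}(x)$ is a continuous adapted real-valued process with initial value $1$. Define
\begin{align*}
\varrho(x):=\inf\{t\geq s:\det\partial_x\Phi_{s,t}(x)\leq 1/2\},\qquad\inf\emptyset:=+\infty.
\end{align*}
As the first hitting time of the closed set $(-\infty,1/2]$ by a continuous adapted process, $\varrho(x)$ is an $\{\cal{F}_t\}$-stopping time, and almost sure continuity combined with $\det I=1>1/2$ yields $\varrho(x)>s$ a.s. For every $t\geq s$ one has $t\wedge\varrho(x)\leq\varrho(x)$, and continuity at the hitting time forces $\det\partial_x\Phi_{s,t\wedge\varrho(x)}(x)\geq 1/2>0$, so $\partial_x\Phi_{s,t\wedge\varrho(x)}(x)$ is invertible almost surely.

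The main obstacle is the bookkeeping for the Stratonovich-It\^o conversion: the integrand $G_k$ contains the \emph{nonlocal} piece $\widetilde E[\partial_\mu V_k\cdot\partial_x\widetilde\Phi]$, and computing its quadratic covariation with $W^k$ requires carefully justifying that all $\widetilde\Omega$-martingale contributions vanish under $\widetilde E$ while the $\Omega$-dependence through $\Phi_{s,r}(x)$ survives and produces $\widetilde E[\partial_x\partial_\mu V_k\cdot\partial_x\widetilde\Phi]V_k$. Matching the resulting correction to both the $\partial_xV_0'-\partial_xV_0$ and $\partial_\mu V_0'-\partial_\mu V_0$ pieces is a forced but delicate algebraic verification, and is exactly where the assumption $V_k\in\cal{C}_{b,Lip}^{2,2}$ is used in full.
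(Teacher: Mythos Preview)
Your argument is correct, but it differs from the paper's on both halves.

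For the Stratonovich SDE satisfied by $\partial_x\Phi_{s,t}(x)$, the paper does \emph{not} pass through the It\^o form and convert back.  Instead it appeals directly to Kunita's differentiation-under-the-Stratonovich-integral criterion \cite[Proposition~2.4.3]{K}: once one checks that both $\phi_k(x,t)=V_k(\Phi_{s,t}(x),P_{\Phi_{s,t}(x)})$ and $\phi_k^1(x,t)=\partial_xV_k(\Phi_{s,t}(x),P_{\Phi_{s,t}(x)})V_k(\Phi_{s,t}(x),P_{\Phi_{s,t}(x)})$ belong to $\mathbf{L}^{1+Lip,p}_{[s,T]}(B(0,M))$, the derivative $\partial_x$ commutes with $\circ\,\rmd W^k$ and the claimed equation follows immediately.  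Your route is legitimate---the covariation $\langle G_k,W^k\rangle$ indeed picks up only the $\Omega$-semimartingale contributions through $\Phi_{s,r}(x)$ and $\partial_x\Phi_{s,r}(x)$, and the algebra matches---but it is heavier: you must track four mixed-derivative terms (including $\partial_x\partial_\mu V_k$) and rely on commutation of $\partial_x$ and $\partial_\mu$, whereas the paper's verification reduces to estimates already established in Theorem~\ref{2}.  Note also that your reference to Proposition~\ref{GIF} is slightly off-target: that proposition treats random fields composed with semimartingales, while what you actually need here is the It\^o formula for functions on $\R^d\times\cal P_2(\R^d)$ (as in \cite[Theorem~7.1]{BLPR}).

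For invertibility, the paper again takes a different and more constructive path: it writes down a Stratonovich SDE for a matrix process $V_t(x)$, notes that its locally Lipschitz coefficients give a solution up to an explosion time $\varrho(x)$, and then shows via It\^o's formula that $V_{t\wedge\varrho(x)}(x)\,\partial_x\Phi_{t\wedge\varrho(x)}(x)=I$.  This produces the \emph{maximal} stopping time (the first moment the Jacobian becomes singular) together with an explicit inverse, both of which are used downstream---e.g.\ the stopping times $\vartheta_m$ before \eqref{3} are defined in terms of $|\partial_x\Phi_t(x)^{-1}|$ and satisfy $\lim_m\vartheta_m=\inf_{x\in\cal M}\varrho(x)$.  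Your determinant hitting-time $\varrho(x)=\inf\{t:\det\partial_x\Phi_{s,t}(x)\le 1/2\}$ is perfectly adequate for the bare statement of Theorem~\ref{thm2} and is pleasantly elementary, but it yields a strictly smaller stopping time and no formula for $(\partial_x\Phi)^{-1}$, so the subsequent construction of $\Psi_{s,t}$ would require a separate argument.
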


\begin{proof}
{\bf Step 1: We will establish that the Jacobian matrix $\partial_x\Phi_{s,t}(x)$ satisfies the above Stratonovich symmetric SDE.}
To establish this, it suffices to prove the interchangeability of integrals and the derivative $\partial_x$.
In terms of Riemann integrals, the interchangeability can be guaranteed under the condition $V_0\in\cal{C}_{b,Lip}^{2,2}(\R^d\times\cal{P}_2(\R^d),\R^d)$.
For Stratonovich symmetric integrals $(k\neq0)$, according to \cite[Proposition 2.4.3]{K}, we need to verify that $\phi_k$ and $\phi_k^1$ belong to $\mathbf{L}_{[s,T]}^{1+Lip,p}(B(0,M))$ for fixed $s\in[0,T],~M\in \mathbb{Z}^+$ and $p>d\vee2$. Here, $\phi_k(x,t)=V_k(\Phi_{s,t}(x),P_{\Phi_{s,t}(x)})$ and $\phi_k^1(x,t)=\partial_xV_k(\Phi_{s,t}(x),P_{\Phi_{s,t}(x)})V_k(\Phi_{s,t}(x),P_{\Phi_{s,t}(x)})$.

Indeed, $\phi_k\in\mathbf{L}_{[s,T]}^{1+Lip,p}(B(0,M))$ has already been proven in Theorem \ref{2}. The same result can be obtained for $\phi_k^1$ through similar estimates.

The arbitrariness of $M$ implies that for any $x\in\R^d$, the Jacobian matrix $\partial_x\Phi_{s,t}(x)$ satisfies the Stratonovich symmetric SDE
\begin{align*}
\partial_x\Phi_{s,t}(x)=I+\sum_{k=0}^{d'}&\int_s^t \partial_x V_k(\Phi_{s,r}(x),P_{\Phi_{s,r}(x)})\cdot \partial_x\Phi_{s,r}(x)\\
&\quad+\widetilde{E}\bigr[\partial_{\mu}V_k(\Phi_{s,r}(x),P_{\Phi_{s,r}(x)},\widetilde{\Phi}_{s,r}(x))\cdot
\partial_x\widetilde{\Phi}_{s,r}(x)\bigr] \circ\rmd W_r^k.
\end{align*}

{\bf Step 2: we will establish that for any $x\in\R^d$ and $s\geq 0$, there exists a stopping time $\varrho(x)$ such that $\partial_x\Phi_{s,t\wedge\varrho(x)}(x)$ is invertible for any $t>s$ almost surely.} For fixed $s\geq0$, we set $\Phi_{s,t}(x)=:\Phi_{t}(x)$ and $\widetilde{\Phi}_{s,t}(x)=:\widetilde{\Phi}_{t}(x)$. Consider the following SDE for $V_t(x)$:
\begin{align*}
V_t(x)=I-\sum_{k=0}^{d'}&\int_s^t V_r(x)\partial_x V_k(\Phi_r(x),P_{\Phi_r(x)})\partial_x\Phi_r(x)V_r(x)\\
&\quad+V_r(x)\widetilde{E}\bigr[\partial_{\mu}V_k(\Phi_r(x),P_{\Phi_r(x)},\widetilde{\Phi}_r(x))\cdot
\partial_x\widetilde{\Phi}_r(x)\bigr]V_r(x) \circ \rmd W_r^k.
\end{align*}
Due to the coefficients of this equation satisfying only the local Lipschitz condition, its unique solution may blow up. We denote the explosion time by $\varrho(s,x)=:\varrho(x).$ By It\^o's formula for Stratonovich symmetric integrals,
\begin{align*}
&~~~V_{t\wedge\varrho(x)}(x)\partial_x\Phi_{t\wedge\varrho(x)}(x)-I\\
&=\int_s^{t\wedge\varrho(x)}V_r(x)\circ \rmd\partial_x\Phi_r(x)+\circ \rmd V_r(x)(\partial_x\Phi_r(x))\\
&=-\sum_{k=0}^{d'}\int_s^{t\wedge\varrho(x)}V_r(x)\Bigr[\partial_x V_k(\Phi_r(x),P_{\Phi_r(x)})\partial_x\Phi_r(x)\\
&\quad\quad\quad+\widetilde{E}\bigr[\partial_{\mu}V_k(\Phi_r(x),P_{\Phi_r(x)},\widetilde{\Phi}_r(x))\cdot
\partial_x\widetilde{\Phi}_r(x)\bigr]\Bigr]\bigr[V_r(x)\partial_x\Phi_r(x)-I\bigr]\circ \rmd W_r^k.
\end{align*}
Applying It\^o's formula once again, we obtain
\begin{align*}
&V_{t\wedge\varrho(x)}(x)\partial_x\Phi_{t\wedge\varrho(x)}(x)-I\\
&=0\cdot\exp\Bigr\{-\sum_{k=0}^{d'}\int_s^{t\wedge\varrho(x)}V_r(x)\Bigr[\partial_x V_k(\Phi_r(x),P_{\Phi_r(x)})\partial_x\Phi_r(x)\\
&\quad\quad\quad+\widetilde{E}\bigr[\partial_{\mu}V_k(\Phi_r(x),P_{\Phi_r(x)},\widetilde{\Phi}_r(x))\cdot
\partial_x\widetilde{\Phi}_r(x)\bigr]\Bigr]\circ \rmd W_r^k\Bigr\}=0.
\end{align*}
Therefore we have $V_{t\wedge\varrho(x)}(x)\partial_x\Phi_{t\wedge\varrho(x)}(x)=I$, which implies that the Jacobian matrix $\partial_x\Phi_{t\wedge\varrho(x)}(x)$ is invertible for any $t>s$ almost surely.
\end{proof}

\begin{rem}\label{4.2}
\begin{enumerate}
\item By Theorem \ref{thm2}, we observe that the first time $t$ such that $\partial_x\Phi_{s,t}(x)$ is not invertible depends on $x$. In the subsequent analysis, we will discover that this leads to the local existence of the inverse function of $\Phi_{s,t}(\omega)$ for almost all $\omega$.
\item There may exist another stopping time $\varrho'(x)$ such that $\partial_x\Phi_{s,t}(x)$ is invertible on $(\varrho(x),\varrho'(x))$. However, as long as the stopping time $\varrho'(x)$ cannot attain the positive infinity, it does not make much difference whether we consider $\varrho(x)$ or $\varrho'(x)$. Therefore, when demonstrating the homeomorphic property below, we only focus on the first time that satisfies certain properties.
\item If $V_0,...,V_d$ belong to $\cal{C}_{b,Lip}^{k,k}(\R^d\times\cal{P}_2(\R^d),\R^d)(k\geq2)$, it holds by Theorem \ref{2} that $\Phi_{s,t}(x)$ has a modification such that it is $C^{k-1}$ with respect to $x$ and the derivatives of all order are continuous in $(s,t,x)$ almost surely.
\end{enumerate}
\end{rem}

Let $\mathcal{M}$ be a given compact set in $\R^d$ and $\vartheta_m$ be the first time $t$ such that $\sup_{x\in\mathcal{M}}|\partial_x\Phi_{t}(x)^{-1}|>m$. It holds that $\lim_{m\rightarrow\infty}\vartheta_m=\inf_{x\in\mathcal{M}}\varrho(x)$. For given $T>0$, consider the following Stratonovich symmetric SDE
\begin{equation}\label{3}
\left\{
\begin{aligned}
%\nonumber
\circ \rmd X^m_t&=-\sum_{k=0}^{d'}\partial_x\Phi_{t\wedge\vartheta_m}(X^m_t)^{-1}
V_k(\Phi_{t}(X^m_t),\widetilde{P}_{\widetilde{\Phi}_{t}(X^m_t)})\circ \rmd W_t^k,\\
X^m_s&=x\in\mathcal{M},\\
\end{aligned}
\right.
\end{equation}
where $t\in [s,T]$ and $\widetilde{P}_{\widetilde{\Phi}_{t}(\cdot)}:\R^d\rightarrow\cal{P}_2(\R^d)$. Here, to avoid confusion, we use a copy of $\Phi_{t}(x)$, denoted as $\widetilde{\Phi}_{t}(x)$, instead of $\Phi_{t}(x)$. We will discuss  the existence and uniqueness of solutions to equation \eqref{3} under $V_0,...,V_d\in\cal{C}_{b,Lip}^{4,4}(\R^d\times\cal{P}_2(\R^d),\R^d)$. For any $n>0$, let $\tau_n$ be the first time $t$ such that
\begin{align}\label{st}
\sup_{x\in\mathcal{M}}\max\{|\Phi_t(x)|,|\partial_x\Phi_{t}(x)|,|\nabla^2\Phi_t(x)|,|\nabla^3\Phi_t(x)|\}>n,
\end{align}
where $\nabla^2\Phi_t(x)=(\partial_{x_ix_j}\Phi_t(x))_{1\leq i,j\leq d}$ and $\nabla^3\Phi_t(x)=(\partial_{x_ix_jx_l}\Phi_t(x))_{1\leq i,j,l\leq d}$.
$\{\tau_n\}$ are stopping times and $\lim_{n\rightarrow\infty}\tau_n=\infty$ because the above four items exist globally according to (3) of Remark \ref{4.2}. Consider
$$\circ \rmd X^{m,n}_t=-\sum_{k=0}^{d'}\partial_x\Phi_{t\wedge\vartheta_m}(X^{m,n}_t)^{-1}
V_k(\Phi_{t\wedge{\tau_n}}(X^{m,n}_t),\widetilde{P}_{\widetilde{\Phi}_t(X^{m,n}_t)})\circ \rmd W_t^k.$$
We denote random coefficients $-\partial_x\Phi_{t\wedge\vartheta_m}(x)^{-1}V_k(\Phi_{t\wedge{\tau_n}}(x),\widetilde{P}_{\widetilde{\Phi}_t(x)})$ by $\cal{V}^{m,n}_k(x,t),~k=0,...,d'$. For the convenience of calculation, we transform the equation into the classical It\^o SDE:
\begin{align}\label{1}
X^{m,n}_t=x+\sum_{k=1}^{d'}\int_s^t\cal{V}^{m,n}_k(X^{m,n}_r,r)\rmd W_r^k+\int_s^t\widetilde{\cal{V}}^{m,n}_0(X^{m,n}_r,r)\rmd r,
\end{align}
where $$\widetilde{\cal{V}}^{m,n}_0(x,r)=\cal{V}^{m,n}_0(x,r)+\frac{1}{2}\sum_{k=1}^{d'}\partial_x\cal{V}^{m,n}_k(x,r)\cal{V}^{m,n}_k(x,r).$$

\begin{lem}
When $V_0,...,V_d$ belong to $\cal{C}_{b,Lip}^{4,4}(\R^d\times\cal{P}_2(\R^d),\R^d)$, the solution of equation \eqref{1} exists uniquely.
\end{lem}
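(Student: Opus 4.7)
The plan is to verify that the random coefficients $\mathcal{V}^{m,n}_k$ and $\widetilde{\mathcal{V}}^{m,n}_0$ appearing in \eqref{1} are adapted, locally Lipschitz in $x$ and of linear growth in $x$ almost surely, so that the classical existence/uniqueness theorem for It\^o SDEs with random coefficients (see, e.g., \cite[Chapter~3]{K}) applies. The truncations $\vartheta_m$ and $\tau_n$ are introduced precisely for this purpose: up to $\vartheta_m$ one has $|\partial_x\Phi_t(x)^{-1}|\leq m$ for $x\in\mathcal{M}$, and up to $\tau_n$ one has a uniform bound by $n$ on $|\Phi_t(x)|$, $|\partial_x\Phi_t(x)|$, $|\nabla^2\Phi_t(x)|$ and $|\nabla^3\Phi_t(x)|$ over $x\in\mathcal{M}$; these bounds extend to a neighbourhood of $\mathcal{M}$ by the continuity in $x$ of the relevant processes (Theorem~\ref{3.1}, Theorem~\ref{2} and Remark~\ref{4.2}(3)).

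First I would treat the diffusion coefficients $\mathcal{V}^{m,n}_k = -\partial_x\Phi_{t\wedge\vartheta_m}(x)^{-1}V_k(\Phi_{t\wedge\tau_n}(x),\widetilde{P}_{\widetilde{\Phi}_t(x)})$. Writing $A(x,t) := \partial_x\Phi_{t\wedge\vartheta_m}(x)^{-1}$, the matrix identity $A(x)-A(y)=A(x)[\partial_x\Phi_{t\wedge\vartheta_m}(y)-\partial_x\Phi_{t\wedge\vartheta_m}(x)]A(y)$, together with $|A|\leq m$ and the $n$-Lipschitz continuity of $x\mapsto\partial_x\Phi_t(x)$ inherited from the bound on $\nabla^2\Phi$ up to $\tau_n$, yields Lipschitz continuity of $A$ in $x$. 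The second factor is handled via Assumption~\ref{con}, the bound $|\partial_x\Phi_{t\wedge\tau_n}(x)|\leq n$, and the Lipschitz estimate
\begin{equation*}
W_2\bigl(\widetilde{P}_{\widetilde{\Phi}_t(x)},\widetilde{P}_{\widetilde{\Phi}_t(y)}\bigr)\leq\bigl(\widetilde{E}|\widetilde{\Phi}_t(x)-\widetilde{\Phi}_t(y)|^2\bigr)^{1/2}\leq C|x-y|,
\end{equation*}
coming from Theorem~\ref{3.1} and Remark~\ref{LC}(1). Since $V_k$ is bounded, the same bounds deliver boundedness (hence linear growth) of $\mathcal{V}^{m,n}_k$.

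For the drift $\widetilde{\mathcal{V}}^{m,n}_0=\mathcal{V}^{m,n}_0+\tfrac12\sum_{k=1}^{d'}\partial_x\mathcal{V}^{m,n}_k\cdot\mathcal{V}^{m,n}_k$ one extra spatial derivative enters. Differentiating $\mathcal{V}^{m,n}_k$ in $x$ produces $\partial_xA=-A\,\nabla^2\Phi_{t\wedge\vartheta_m}\,A$ together with terms of the form $\partial_xV_k\cdot\partial_x\Phi_{t\wedge\tau_n}+\widetilde{E}[\partial_\mu V_k\cdot\partial_x\widetilde{\Phi}_t]$; demanding Lipschitz continuity of these in $x$ brings in $\nabla^3\Phi$ and the second-order (spatial and Lions) derivatives of $V_k$. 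Both are bounded and Lipschitz thanks to $V_k\in\mathcal{C}_{b,Lip}^{4,4}$ (again via Remark~\ref{4.2}(3)) and the truncation by $\tau_n$. With Lipschitz continuity and linear growth of both $\mathcal{V}^{m,n}_k$ and $\widetilde{\mathcal{V}}^{m,n}_0$ in hand, the classical theorem yields a unique adapted continuous solution of \eqref{1} on $[s,T]$.

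The main technical burden is this bookkeeping of the regularity chain: the It\^o--Stratonovich correction already costs one derivative inside $\mathcal{V}^{m,n}_k$, so establishing the Lipschitz property of $\partial_x\mathcal{V}^{m,n}_k$ consumes essentially three orders of smoothness on $\Phi_t$ and two on $V_k$, which is what ultimately forces the hypothesis $V_k\in\mathcal{C}_{b,Lip}^{4,4}$ rather than the weaker $\mathcal{C}_{b,Lip}^{2,2}$ used earlier in the paper.
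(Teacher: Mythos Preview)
Your proposal is correct and follows essentially the same route as the paper: verify that the truncated random coefficients $\mathcal{V}^{m,n}_k$ and $\widetilde{\mathcal{V}}^{m,n}_0$ are Lipschitz in $x$ and of linear growth, then invoke a standard existence/uniqueness theorem for SDEs with random coefficients (the paper cites \cite[Theorem~5.5.1]{AF}, you cite \cite[Chapter~3]{K}; either works). The paper organises the Lipschitz estimate via the mean-value integral $z_\tau=y+\tau(x-y)$ rather than your matrix identity $A(x)-A(y)=A(x)[\partial_x\Phi(y)-\partial_x\Phi(x)]A(y)$, but these are equivalent.

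One small slip: $V_k$ is \emph{not} assumed bounded (Assumption~\ref{con} only bounds $\partial_xV_k$ and $\partial_\mu V_k$), so $\mathcal{V}^{m,n}_k$ is not bounded but merely of linear growth in $x$. The $x$-dependence survives through the untruncated measure argument $\widetilde{P}_{\widetilde{\Phi}_t(x)}$, since $W_2(\widetilde{P}_{\widetilde{\Phi}_t(x)},\delta_0)\leq (E|\Phi_t(x)|^2)^{1/2}\leq C(1+|x|)$ by \eqref{B}. This does not affect the rest of your argument.
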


\begin{proof}
Without loss of generality, we suppose that $\cal{M}$ is the closure of $B(0,M)$. We will demonstrate that $\cal{\widetilde{V}}^{m,n}_0,\cal{V}^{m,n}_k(x,r)$ with $k=1,...,d'$ satisfy the global linear growth and global Lipschitz conditions. In fact, it holds by \eqref{B} that for all $x\in\cal{M}$,
\begin{align}\label{6} |\cal{V}^{m,n}_k(x,r)|&=|\partial_x\Phi_{r\wedge\vartheta_m}(x)^{-1}V_k(\Phi_{r\wedge{\tau_n}}(x),\widetilde{P}_{\widetilde{\Phi}_r(x)})|\\
	&\leq m\big(|V_k(\Phi_{r\wedge{\tau_n}}(x),\widetilde{P}_{\widetilde{\Phi}_r(x)})-V_k(0,\delta_0)|+|V_k(0,\delta_0)|\big)\nonumber\\
	&\leq m\big(|\Phi_{r\wedge{\tau_n}}(x)|+(E|\Phi_r(x)|^2)^{\frac{1}{2}}+|V_k(0,\delta_0)|\big)\nonumber\\
	&\leq m\big(|\Phi_{r\wedge{\tau_n}}(x)|+(C(1+|x|^2))^{\frac{1}{2}}+|V_k(0,\delta_0)|\big)\nonumber\\
	&\leq C(m,n,T,k,K,d')(1+|x|),\nonumber
\end{align}
which implies that $\cal{V}^{m,n}_k(x,r)$ satisfies the global linear growth condition for $k=0,...,d'$.

After a simple calculation by the definition of $\tau_n$, there exists a constant $C(K,T,M,n,m)$ such that for any $(x,r)\in B(0,M)\times[s,T]$,
\begin{align}\label{4}
|\partial_x\cal{V}^{m,n}_k(x,r)|\leq C(K,T,M,n,m),
\end{align}
\begin{align}\label{5}
|\nabla^2\cal{V}^{m,n}_k(x,r)|\leq C(K,T,M,n,m).
\end{align}
Therefore, it follows from \eqref{6} and \eqref{4} that
\begin{align*}
\Big|\sum_{k=1}^{d'}\partial_x\cal{V}^{m,n}_k(x,r)\cal{V}^{m,n}_k(x,r)\Big|
&\leq\sum_{k=1}^{d'}|\cal{V}^{m,n}_k(x,r)\|\partial_x\cal{V}^{m,n}_k(x,r)|\\
&\leq C(K,T,M,n,d',m)(1+|x|).
\end{align*}
By combining \eqref{6}, we can conclude that $\cal{\widetilde{V}}^{m,n}_0$ satisfies the global linear growth condition.

Next, we will show that the coefficients satisfy the global Lipschitz condition. In fact, for any $x,y\in\cal{M}$ and $\tau\in[0,1]$, letting $z_\tau:=y+\tau(x-y)\in\cal{M}$, we obtain
\begin{align*}
&|\cal{V}^{m,n}_k(x,r)-\cal{V}^{m,n}_k(y,r)|\\
&=|\partial_x\Phi_{r\wedge\vartheta_m}(x)^{-1}V_k(\Phi_{r\wedge{\tau^n}}(x),\widetilde{P}_{\widetilde{\Phi}_r(x)})
	-\partial_x\Phi_{r\wedge\vartheta_m}(y)^{-1}V_k(\Phi_{r\wedge{\tau^n}}(y),\widetilde{P}_{\widetilde{\Phi}_r(y)})|\\
&=|\int_0^1\frac{\rmd}{\rmd
	\tau}[\partial_x\Phi_{r\wedge\vartheta_m}(z_\tau)^{-1}V_k(\Phi_{r\wedge{\tau^n}}(z_\tau),\widetilde{P}_{\widetilde{\Phi}_r(z_\tau)})]\rmd
	\tau|\\
&\leq|x-y|\Bigr(\int_0^1|\partial_x\Phi_{r\wedge\vartheta_m}(z_\tau)^{-1}\partial_x V_k\cdot\partial_x\Phi_{r\wedge{\tau^n}}(z_\tau)|\rmd \tau+\int_0^1|\partial_x\Phi_{r\wedge\vartheta_m}(z_\tau)^{-1}\widetilde{E}[\partial_\mu V_k\cdot\partial_x\widetilde{\Phi}_r(z_\tau)]|\rmd \tau\\ &\qquad\qquad+\int_0^1|\partial_x\Phi_{r\wedge\vartheta_m}(z_\tau)^{-1}\nabla^2\Phi_{r\wedge\vartheta_m}(z_\tau)\partial_x\Phi_{r\wedge\vartheta_m}(z_\tau)^{-1}
	V_k(\Phi_{r\wedge{\tau^n}}(z_\tau),\widetilde{P}_{\widetilde{\Phi}_r(z_\tau)})|\rmd \tau\Bigr)\\
&\leq|x-y|\Big(nmK+mK\int_0^1\widetilde{E}|\partial_x\widetilde{\Phi}_r(z_\tau)|\rmd
	\tau+m^2C(n,M,T,d',K)\int_0^1|\nabla^2\Phi_{r\wedge\vartheta_m}(z_\tau)|\rmd \tau\Big)\\
&\leq C(K,T,M,n,m,d')|x-y|,
\end{align*}
which implies that $\cal{V}^{m,n}_k$ satisfies the global Lipschitz condition for $k=0,...,d'$.
It follows from \eqref{5} that
\begin{align*}
|\partial_x\cal{V}^{m,n}_k(x,r)-\partial_x\cal{V}^{m,n}_k(y,r)|&\leq\Big|\int_0^1\nabla^2\cal{V}^{m,n}_k(z_\tau,r)\rmd\tau\Big|~|x-y|\\
	&\leq \int_0^1|\nabla^2\cal{V}^{m,n}_k(z_\tau,r)|\rmd \tau\cdot|x-y|\\
	&\leq C(K,T,M,n,m)|x-y|.
\end{align*}
Further,
\begin{align*}
	&|\sum_{k=1}^{d'}\partial_x\cal{V}^{m,n}_k(x,r)\cal{V}^{m,n}_k(x,r)-\sum_{k=1}^{d'}\partial_x\cal{V}^{m,n}_k(y,r)\cal{V}^{m,n}_k(y,r)|\\
	&\leq\sum_{k=1}^{d'}|\cal{V}^{m,n}_k(x,r)\|\partial_x\cal{V}^{m,n}_k(x,r)-\partial_x\cal{V}^{m,n}_k(y,r)|
	+|\cal{V}^{m,n}_k(x,r)-\cal{V}^{m,n}_k(y,r)\|\partial_x\cal{V}^{m,n}_k(y,r)|\\
	&\leq C(K,T,M,n,m)|x-y|,
\end{align*}
which implies that $\cal{\widetilde{V}}^{m,n}_0$ satisfies the global Lipschitz condition. By \cite[Theorem~5.5.1]{AF}, the solution of equation \eqref{1} exists uniquely.
\end{proof}

For given $n,~m\geq 1$, let $X^{m,n,x}_t$ be the unique solution to \eqref{1} starting from $x\in\mathcal{M}$ at $s$. By the path-wise uniqueness of the solution, it holds that $X^{m,n,x}_t=X^{m,n+1,x}_t$ if $t <\tau_n$. Therefore there exists $X^{m,x}_t$ such that $X^{m,x}_t= X^{m,n,x}_t$ holds almost surely if $t < \tau_n$. Then $X^{m,x}_t$ is the unique solution of equation \eqref{3}. Let $\overline{\tau}_m(x)$ be the first time $t$ such that $\partial_x\Phi_{t}(X^{m,x}_t)^{-1}$ is not well-defined. Then it holds that
\begin{align*}
X^{m,x}_t=x-\sum_{k=0}^{d'}\int_s^t\partial_x\Phi_{r}(X^{m,x}_r)^{-1}
V_k(\Phi_{r}(X^{m,x}_r),\widetilde{P}_{\widetilde{\Phi}_{r}(X^{m,x}_r)})\circ \rmd W_r^k
\end{align*}
when $t\in[s,\overline{\tau}_m(x))$. From the definition of $\vartheta_m$, it holds that $s\leq\overline{\tau}_m(x)\leq\vartheta_m$ for any $x\in\mathcal{M}$. Hence, we obtain a sequence of stopping times $\{\overline{\tau}_m(x)\}_{m=1}^{+\infty}$ satisfying
$$s\leq\overline{\tau}_1(x)\leq...\leq\overline{\tau}_m(x)\leq\overline{\tau}_{m+1}(x)\leq...\leq\lim_{m\rightarrow\infty}\vartheta_m
=\inf_{x\in\mathcal{M}}\varrho(x),$$
and $\lim_{m\rightarrow\infty}\overline{\tau}_m(x)=:\overline{\tau}(x)$ exists for any $x\in\mathcal{M}$. At the end of this paper, we will provide an example to illustrate that $\varrho(x)$ in general cannot take on the value $+\infty$, and as a consequence, neither can $\overline{\tau}(x)$. By applying a similar approach as we did for `$n$' to handle `$m$', we obtain $X_t^x$ satisfying $X^{x}_t= X^{m,x}_t$ almost surely if $t<\overline{\tau}_m(x)$. We denote $X_t^x$ by $\Psi_{s,t}(x)$ satisfying
\begin{align}\label{11}
\Psi_{s,t\wedge\overline{\tau}(x)}(x)=x-\sum_{k=0}^{d'}\int_s^{t\wedge\overline{\tau}(x)}\partial_x\Phi_{s,r}(\Psi_{s,r}(x))^{-1}
V_k(\Phi_{s,r}(\Psi_{s,r}(x)),\widetilde{P}_{\widetilde{\Phi}_{s,r}(\Psi_{s,r}(x))})\circ \rmd W_r^k.
\end{align}
Due to the arbitrariness of $\mathcal{M}$, equation \eqref{11} holds for any $x\in\R^d$.

We are now ready to state and prove the main result of this section.
\begin{thm}\label{4.4}
For any $x\in\R^d$ and $s\geq 0$, there exists a stopping time $\tau(s,x)>s$ such that
\begin{align*}
\Phi_{s,t\wedge\tau(s,x)}(\Psi_{s,t\wedge\tau(s,x)}(x))=\Psi_{s,t\wedge\tau(s,x)}(\Phi_{s,t\wedge\tau(s,x)}(x))=x,~t\geq s
\end{align*}
holds almost surely.
\end{thm}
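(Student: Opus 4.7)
The strategy follows Kunita's approach for classical SDEs, now applied via the generalized It\^o formula (Proposition~\ref{GIF}) in both directions. The natural candidate for the stopping time is $\tau(s, x) = \overline{\tau}(x)$, possibly further truncated so that $\Phi_{s,r}(x)$ remains in a compact set on which $\Psi_{s,r}(\cdot)$ is spatially smooth. The two inverse identities are then obtained as exact cancellations in Stratonovich integrals.

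\emph{First identity: $\Phi_{s, t\wedge\tau(s,x)}(\Psi_{s, t\wedge\tau(s,x)}(x)) = x$.} View $F(y, r) := \Phi_{s, r}(y)$ as a Stratonovich random field in $y$ governed by
\[
F(y, t) = y + \sum_{k=0}^{d'}\int_s^t V_k(\Phi_{s,r}(y), P_{\Phi_{s,r}(y)})\circ \rmd W_r^k,
\]
and apply Proposition~\ref{GIF} at the continuous semi-martingale $X_t := \Psi_{s, t}(x)$, which is a semi-martingale by \eqref{11}. The correction term produced is $\sum_{i=1}^d \int_s^t \partial_{y_i}\Phi_{s,r}(\Psi_{s,r}(x))\circ \rmd \Psi^i_{s,r}(x)$; substituting \eqref{11} for $\rmd \Psi_{s,r}^i(x)$ and using the matrix identity $\partial_x \Phi_{s,r}(\Psi_{s,r}(x)) \cdot \partial_x \Phi_{s,r}(\Psi_{s,r}(x))^{-1} = I$ (valid on $[s, \overline{\tau}(x))$ by construction of $\Psi$), this correction exactly cancels the middle Stratonovich term of the generalized It\^o formula, leaving $\Phi_{s,t}(\Psi_{s,t}(x)) = \Psi_{s,s}(x) = x$.

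\emph{Second identity: $\Psi_{s, t\wedge\tau(s,x)}(\Phi_{s, t\wedge\tau(s,x)}(x)) = x$.} Symmetrically, apply Proposition~\ref{GIF} to the random field $G(y, r) := \Psi_{s, r}(y)$, which satisfies \eqref{11} with $y$ in place of $x$, at the semi-martingale $X_t := \Phi_{s, t}(x)$. The crucial new input is supplied by the first identity: $\Phi_{s,r}(\Psi_{s,r}(y)) = y$ in the valid range, so differentiating in $y$ yields the chain-rule identity $\partial_y \Psi_{s,r}(y) = \partial_x \Phi_{s,r}(\Psi_{s,r}(y))^{-1}$. Combining this with $\circ \rmd \Phi^i_{s,r}(x) = \sum_k V_k^i(\Phi_{s,r}(x), P_{\Phi_{s,r}(x)})\circ \rmd W_r^k$ and the consequence $\Phi_{s,r}(\Psi_{s,r}(\Phi_{s,r}(x))) = \Phi_{s,r}(x)$ of the first identity, the two Stratonovich integrals again cancel, giving $\Psi_{s,t}(\Phi_{s,t}(x)) = \Phi_{s,s}(x) = x$.

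\emph{Main obstacle.} The core technical challenge is verifying the hypotheses of Proposition~\ref{GIF} for both $F$ and $G$, namely that they together with the spatial derivatives entering the formula belong to the class $\mathbf{L}^{3+lip, p}_{\mathbb{T}}(B(0,M))$. For $F$ this is furnished by Theorem~\ref{2} and Remark~\ref{4.2}(3) under the running assumption $V_k \in \mathcal{C}_{b,Lip}^{4,4}(\R^d \times \mathcal{P}_2(\R^d); \R^d)$. For $G$ the spatial regularity of $\Psi_{s,r}(\cdot)$ is more delicate, because $\Psi_{s,r}(y)$ is defined only up to the $y$-dependent stopping time $\overline{\tau}(y)$; the intention is to work first with the globally-in-time truncations $\Psi^m_{s,r}(y) = X^{m, y}_r$ on a fixed compact $\mathcal{M}$ containing a neighborhood of $x$, apply Proposition~\ref{GIF} to $\Psi^m$, and then pass to the limit $m \to \infty$ after choosing $\tau(s,x) \leq \overline{\tau}(x)$ small enough to also guarantee $\Phi_{s,r}(x) \in \mathcal{M}$ throughout $[s, \tau(s,x))$. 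Handling the random measure $\widetilde{P}_{\widetilde{\Phi}_{s,r}(\cdot)}$ consistently under these substitutions, and synchronizing the stopping-time constraints between the two directions, is where most of the care is needed.
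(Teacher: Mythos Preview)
Your argument for the first identity matches the paper's and is correct: the Stratonovich terms cancel exactly on $[s,\overline{\tau}(x))$.

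There is a genuine gap in your treatment of the second identity. You assert that ``the two Stratonovich integrals again cancel,'' but this is precisely where the McKean-Vlasov structure bites and the classical argument breaks down. When you apply Proposition~\ref{GIF} to $G(y,r)=\Psi_{s,r}(y)$ at $X_r=\Phi_{s,r}(x)$, the spatial-derivative term carries $V_k\big(\Phi_{s,r}(x),\,P_{\Phi_{s,r}(x)}\big)$, whereas the ``time'' term coming from \eqref{11} carries $V_k\big(\Phi_{s,r}(\Psi_{s,r}(\Phi_{s,r}(x))),\,\widetilde{P}_{\widetilde{\Phi}_{s,r}(\Psi_{s,r}(\Phi_{s,r}(x)))}\big)$. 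Even if one grants that the first arguments agree, the \emph{measure} arguments $P_{\Phi_{s,r}(x)}$ and $\widetilde{P}_{\widetilde{\Phi}_{s,r}(\Psi_{s,r}(\Phi_{s,r}(x)))}$ coincide only if $\Psi_{s,r}(\Phi_{s,r}(x))=x$, which is exactly what you are trying to prove. So the cancellation is not exact and your argument is circular at this step. (Relatedly, the ``consequence $\Phi_{s,r}(\Psi_{s,r}(\Phi_{s,r}(x)))=\Phi_{s,r}(x)$'' you invoke applies the first identity at the random point $\Phi_{s,r}(x)$ rather than at the fixed $x$ for which it was established; this also needs care.)

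The paper closes this gap not by cancellation but by expressing the residual
\[
V_k\big(\Phi_{s,r}(x),\widetilde{P}_{\widetilde{\Phi}_{s,r}(\Psi_{s,r}(\Phi_{s,r}(x)))}\big)
-V_k\big(\Phi_{s,r}(x),\widetilde{P}_{\widetilde{\Phi}_{s,r}(x)}\big)
\]
via the Lions derivative $\partial_\mu V_k$ as a term of the form $\Pi(x,r)\big(\Psi_{s,r}(\Phi_{s,r}(x))-x\big)$. This turns $Z_r:=\Psi_{s,r}(\Phi_{s,r}(x))-x$ into the solution of a \emph{linear homogeneous} Stratonovich SDE with $Z_s=0$, hence $Z\equiv 0$ on $[s,\overline{\tau'}(x))$, where $\overline{\tau'}(x)$ is the first time $\partial_x\Psi_{s,t}(\Phi_{s,t}(x))$ fails to be well defined. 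The final stopping time is $\tau(s,x)=\overline{\tau}(x)\wedge\overline{\tau'}(x)$. This use of $\partial_\mu V_k$ to absorb the measure discrepancy is the missing idea in your proposal; the regularity verifications you flag as the ``main obstacle'' are secondary by comparison.
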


\begin{proof}
For fixed $s\geq0$, we set $\Phi_{s,t}(x)=:\Phi_{t}(x),~\Psi_{s,t}(x)=:\Psi_{t}(x)$ and omit $\sum_{k=0}^{d'}$ in the following proof. By a similar computation as in Step 2 of Theorem \ref{2}, we establish that $\Phi_{t}(x)$ and $\Psi_{t}(x)$ satisfy the conditions of Proposition \ref{GIF}. Therefore, applying Proposition \ref{GIF} on $[s,\overline{\tau}(x))$, we have
\begin{align*}
&\circ \rmd\Phi_t(\Psi_t(x))\\
&=V_k(\Phi_t(\Psi_t(x)),\widetilde{P}_{\widetilde{\Phi}_t(\Psi_t(x))})\circ \rmd W_t^k
\!+\!\partial_x\Phi_t(\Psi_t(x))\circ  \rmd\Psi_t(x)\\
&=V_k(\Phi_t(\Psi_t(x)),\widetilde{P}_{\widetilde{\Phi}_t(\Psi_t(x))})\circ \rmd W_t^k\\
&\quad-\partial_x\Phi_t(\Psi_t(x))\cdot\partial_x\Phi_t(\Psi_t(x))^{-1}
V_k(\Phi_t(\Psi_t(x)),\widetilde{P}_{\widetilde{\Phi}_t(\Psi_t(x))})\circ \rmd W_t^k\\
&=0,
\end{align*}
which implies that $\Phi_{t\wedge\overline{\tau}(x)}(\Psi_{t\wedge\overline{\tau}(x)}(x))=x$. Further, when $t\in[s,\overline{\tau}(x))$, $\partial_x\Psi_{t}(x)=\partial_x\Phi_{t}(\Psi_{t}(x))^{-1}$ holds, and consequently,
$$\circ\rmd\Psi_t(x)=-\partial_x\Psi_{t}(x)V_k(\Phi_t(\Psi_t(x)),\widetilde{P}_{\widetilde{\Phi}_t(\Psi_t(x))})\circ \rmd W_t^k.$$
We denote by $\overline{\tau'}(x)$ the first time $t$ such that $\partial_x\Psi_{t}(\Phi_t(x))$ is not well-defined. Applying Proposition~\ref{GIF} on $[s,\overline{\tau'}(x))$, we obtain
\begin{align*}
&\circ \rmd\Psi_t(\Phi_t(x))\\
&=\!-\!\partial_x\Psi_t(\Phi_t(x))V_k(\Phi_t(x),\widetilde{P}_{\widetilde{\Phi}_t
(\Psi_t(\Phi_t(x)))})
\circ \rmd W_t^k \!+\!\partial_x\Psi_t(\Phi_t(x))\circ \rmd\Phi_t(x)\\
&=\!-\!\partial_x\Psi_t(\Phi_t(x))\Big[V_k(\Phi_t(x),\widetilde{P}_{\widetilde{\Phi}_t
(\Psi_t(\Phi_t(x)))})-V_k(\Phi_t(x),\widetilde{P}_{\widetilde{\Phi}_t(x)})\Big]\circ \rmd W_t^k\\
&=\!-\!\partial_x\Psi_t(\Phi_t(x))\int_0^1\frac{\partial}{\partial \tau}
\Big[V_k\big(\Phi_t(x),\widetilde{P}_{\widetilde{\Phi}_{t}\big(x+\tau(\Psi_t(\Phi_t(x))-x)\big)}\big)\Big]\rmd \tau\circ \rmd W_t^k\\
&=\!-\!\partial_x\Psi_t(\Phi_t(x))~\Pi(x,t)~(\Psi_t(\Phi_t(x))-x)\circ \rmd W_t^k,
\end{align*}
where
\begin{align*}
&\Pi(x,t)\\
&=\int_0^1\widetilde{E}\Big[\partial_\mu
V_k\big(\Phi_t(x),\widetilde{P}_{\widetilde{\Phi}_{t}\big(x+\tau(\Psi_t(\Phi_t(x))-x)\big)},
\widetilde{\Phi}_{t}\big(x\!+\!\tau(\Psi_t(\Phi_t(x))\!-\!x)\big)\big)\!\cdot\!\partial_x\widetilde{\Phi}_{t}\big(x\!+\!\tau(\Psi_t(\Phi_t(x))\!-\!x)\big)\big)\Big]\rmd \tau.
\end{align*}
Further,
\begin{align*}
&\Psi_{t\wedge\overline{\tau'}(x)}(\Phi_{t\wedge\overline{\tau'}(x)}(x))-x\\
&=-\int_0^{t\wedge\overline{\tau'}(x)}\partial_x\Psi_r(\Phi_r(x))\Pi(x,r)(\Psi_r(\Phi_r(x))-x)\circ \rmd W_r^k.
\end{align*}
By It\^o's formula,
\begin{align*}
&\Psi_{t\wedge\overline{\tau'}(x)}(\Phi_{t\wedge\overline{\tau'}(x)}(x))-x\\
&=0\exp\Big\{-\int_0^{t\wedge\overline{\tau'}(x)}\partial_x\Psi_r(\Phi_r(x))\Pi(x,r)\circ \rmd W_r^k\Big\}\\
&=0.
\end{align*}
In fact, $\overline{\tau}(x)$ and $\overline{\tau'}(x)$ depend on $s$ and are strictly greater than $s$. We set $\tau(s,x):=\min\{\overline{\tau}(x),\overline{\tau'}(x)\}$ which is what we need. The proof is complete.
\end{proof}

%Theorem \ref{4.4} suggests that for any $x\in\R^d$, $\Phi_{s,t}(\Psi_{s,t}(x))=\Psi_{s,t}(\Phi_{s,t}(x))=x$ holds as $t\in[s,\tau(s,x))$.

For $t\geq s$, we denote the set $\{x:\tau(s,x)>t\}$ by $D_{s,t}$. Since $\partial_x\Phi_{s,t}(x)$ is continuous with respect to $x$ almost surely,
$\tau(s,x)$ is also continuous with respect to $x$ almost surely. This implies that the set $D_{s,t}$ is an open set almost surely.
According to Theorem \ref{4.4},  for almost all $\omega\in\Omega$,  $\Phi_{s,t}(\cdot,\omega)|_{D_{s,t}(\omega)}=\Phi_{s,t\wedge\tau(s,\cdot,\omega)}(\cdot,\omega)|_{D_{s,t}(\omega)}$ is a homeomorphism from $D_{s,t}(\omega)$ to its image set $R_{s,t}(\omega):=\{\Phi_{s,t}(x,\omega):x\in D_{s,t}(\omega)\}.$
Meanwhile, for any $x\in D_{s,t}(\omega)$, the Jacobian matrix $\partial_x\Phi_{s,t}(x,\omega)$ is invertible, which implies that the inverse mapping $\Psi_{s,t}(\omega)$ is again of $C^k$-class if $\Phi_{s,t}(\omega)$ belongs to $C^k$-class by the implicit function theorem.
 Finally, the conclusion is summarized as follows:
\begin{thm}\label{thm}
Suppose that $V_0,...,V_d$ belong to $\cal{C}_{b,Lip}^{k,k}(\R^d\times\cal{P}_2(\R^d),\R^d)(k\geq4)$. Then the mapping $\Phi_{s,t}:D_{s,t}\rightarrow R_{s,t}$ is a $C^{k-1}$-diffeomorphism for any $s<t$ almost surely.
\end{thm}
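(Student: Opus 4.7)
The plan is to reduce the statement to a routine application of the classical inverse function theorem, once we have collected on a single full-measure event the three ingredients already established in the paper: (a) $C^{k-1}$-regularity of $\Phi_{s,t}(\cdot)$; (b) invertibility of the Jacobian on the random open set $D_{s,t}$; and (c) the global bijectivity of $\Phi_{s,t}|_{D_{s,t}}$ onto $R_{s,t}$, with inverse $\Psi_{s,t}$, coming from Theorem~\ref{4.4}. All the stochastic analysis has been done in Sections~3--4; no further PDE or SDE estimate is needed.

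First, I would fix $s<t$. By Remark~\ref{4.2}(3), since $V_0,\dots,V_d\in\cal{C}_{b,Lip}^{k,k}$ with $k\ge 4$, $\Phi_{s,t}(\cdot)$ admits a modification which is $(k-1)$-times continuously differentiable in $x$, with all derivatives continuous in $(s,t,x)$ almost surely. By Theorem~\ref{thm2}, $\partial_x\Phi_{s,t}(x)$ is invertible for every $t<\varrho(s,x)$; and by the construction preceding Theorem~\ref{4.4} one has $\tau(s,x)=\min\{\overline{\tau}(x),\overline{\tau'}(x)\}\le \varrho(s,x)$, so the Jacobian is invertible for each $x\in D_{s,t}$. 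Intersecting the countably many null sets associated with these statements (possibly taken first for rationals $(s,t)$ and extended by continuity) produces a full-measure event $\Omega_0$ on which simultaneously: $\Phi_{s,t}(\cdot)$ is $C^{k-1}$, the map $x\mapsto\tau(s,x)$ is lower semicontinuous (from continuity of $\partial_x\Phi_{s,t}(x)$ in $x$) so $D_{s,t}$ is open, $\partial_x\Phi_{s,t}(x)$ is invertible for all $x\in D_{s,t}$, and $\Phi_{s,t}|_{D_{s,t}}:D_{s,t}\to R_{s,t}$ is a homeomorphism with inverse $\Psi_{s,t}|_{R_{s,t}}$.

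On $\Omega_0$, for each $x\in D_{s,t}$ the classical inverse function theorem applied to the $C^{k-1}$ map $\Phi_{s,t}$ at $x$ (where the differential is nonsingular) yields open neighborhoods $U_x\subset D_{s,t}$ of $x$ and $V_x\subset R_{s,t}$ of $\Phi_{s,t}(x)$ such that $\Phi_{s,t}:U_x\to V_x$ is a $C^{k-1}$-diffeomorphism. Since $\Phi_{s,t}|_{D_{s,t}}$ is globally bijective with global inverse $\Psi_{s,t}$, the local inverse produced by the inverse function theorem must coincide with $\Psi_{s,t}|_{V_x}$; hence $\Psi_{s,t}$ is $C^{k-1}$ on each $V_x$, and therefore globally $C^{k-1}$ on $R_{s,t}$. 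This promotes the global homeomorphism to a global $C^{k-1}$-diffeomorphism and completes the proof. The only subtlety — really just measurability bookkeeping — is the simultaneous selection of modifications; this is harmless because the $C^{k-1}$ modification of Theorem~\ref{2} and the stopping times $\overline{\tau}_m(x),\overline{\tau'}_m(x)$ entering $\tau(s,x)$ are built from the same driving Brownian motion, so their exceptional sets may be intersected.
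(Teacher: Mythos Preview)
Your proposal is correct and follows essentially the same route as the paper: the paragraph preceding the theorem assembles exactly the three ingredients you list---openness of $D_{s,t}$ via continuity of $\tau(s,\cdot)$, the global homeomorphism from Theorem~\ref{4.4}, and invertibility of $\partial_x\Phi_{s,t}(x)$ on $D_{s,t}$---and then invokes the implicit (equivalently, inverse) function theorem to upgrade the inverse to $C^{k-1}$. Your treatment is somewhat more explicit about the null-set bookkeeping and the local-to-global patching, but the substance is identical.
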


\begin{rem}
\begin{enumerate}
  \item $\Phi_{s,s}(\omega)$ is the identical mapping for almost all $\omega\in\Omega$.
  \item If $\tau(s,x)$ is independent of $x$, denoted by $\tau(s)$, $\Phi_{s,t}(\omega)$ is globally diffeomorphic as $t<\tau(s,\omega)$ for almost all $\omega\in\Omega$; see the special case of Example \ref{ex}.
  \item $D_{s,t}(\omega)$ is decreasing with respect to $t$ for almost all $\omega\in\Omega$.
  \item In contrast to classical SDEs, whose coefficients satisfy the globally Lipschitz condition, the solution of McKean-Vlasov SDEs in general cannot be globally homeomorphic at any time except the initial time $s$.
\end{enumerate}
\end{rem}

We present an example to further illustrate the conclusions of this paper.

\begin{exam}\label{ex}
Consider the following $1$-dimensional McKean-Vlasov SDE
\begin{equation}
\left\{
\begin{aligned}
\nonumber
\rmd X_t&=\int_{\mathbb{R}}\big(f(X_t)-z\big) P_{X_t}(\rmd z) ~\rmd t+\int_{\mathbb{R}}\big(X_t-z\big) P_{X_t}(\rmd z) ~\rmd W_t,\\
X_s&=x,\\
\end{aligned}
\right.
\end{equation}
where $W_t$ is a $1$-dimensional Brownian motion.  Assume that the function $f:\R\rightarrow\R$ is $C^\infty$ and satisfies the global Lipschitz condition, which ensures that the coefficients of the equation are globally Lipschitz continuous. Therefore, the unique solution of the equation exists globally, still denoted by $\Phi_{s,t}(x)$.

Set $F(P_v):=\int_{\R}z~P_v(\rmd z)$, $P_v\in\cal{P}_2(\R)$. Then $F$ is Lions differentiable in $\cal{P}_2(\R)$ and the Lions derivative of $F$ at $P_{v_0}\in\cal{P}_2(\R)$ satisfies $\partial_\mu F(P_{v_0},y)=1$ for any $y\in\R$; see \cite[Example~2.2]{BLPR} for details.
Therefore, Theorem \ref{2} implies that the Jacobian matrix of $\Phi_{s,t}(x)$ satisfies
\begin{align*}
\partial_x\Phi_{s,t}(x)=1+\int_s^t\partial_xf(\Phi_{s,r}(x))\partial_x\Phi_{s,r}(x)-E\big[\partial_x\Phi_{s,r}(x)\big]\rmd r+
\int_s^t\partial_x\Phi_{s,r}(x)-E\big[\partial_x\Phi_{s,r}(x)\big]\rmd W_r.
\end{align*}
By \cite[Theorem 8.4.2]{ARn},
\begin{align*}
&\partial_x\Phi_{s,t}(x)\\
&=e^{\int_s^t\partial_xf(\Phi_{s,r}(x))\rmd r-\frac{1}{2}(t-s)+W_t-W_s}
\Big(1-\int_s^te^{-\int_s^r\partial_xf(\Phi_{s,u}(x))\rmd u+\frac{1}{2}(r-s)-W_r+W_s}E\big[\partial_x\Phi_{s,r}(x)\big]\rmd W_r\Big)\\
&=e^{\int_s^t\partial_xf(\Phi_{s,r}(x))\rmd r-\frac{1}{2}t+W_t}
\Big(e^{\frac{1}{2}s-W_s}-\int_s^te^{-\int_s^r\partial_xf(\Phi_{s,u}(x))\rmd u+\frac{1}{2}r-W_r}E\big[\partial_x\Phi_{s,r}(x)\big]\rmd W_r\Big).
\end{align*}
Set
$$f_s(t,x):=\int_s^te^{-\int_s^r\partial_xf(\Phi_{s,u}(x))\rmd u+\frac{1}{2}r-W_r}E\big[\partial_x\Phi_{s,r}(x)\big]\rmd W_r.$$
Note that $f_s(\cdot,x)$ is continuous almost surely and $f_s(s,x)=0<e^{\frac{1}{2}s-W_s}$.
For given $x\in\R$ and $s\geq0$, denote by $\varrho(s,x)$ the first time $t$ such that
$$e^{\frac{1}{2}s-W_s}\leq f_s(t,x).$$
We consider $\{x:\varrho(s,x)>t\}=:D_{s,t}$. Then for almost all $\omega$, the sign of $\partial_x\Phi_{s,t}(x,\omega)$ is strictly greater than zero for any $x\in D_{s,t}(\omega)$. However, outside of $D_{s,t}(\omega)$, the sign becomes indefinite due to It\^o's integral. Consequently, for any $t>s$, $\Phi_{s,t}(\omega)$ is a local homeomorphism from $D_{s,t}(\omega)$ to its image set for almost all $\omega$.

In particular, if $f:\R\rightarrow\R$ is the identical mapping, through straightforward calculations, we obtain
\begin{align*}
\partial_x\Phi_{s,t}(x)&=e^{\frac{1}{2}(t-s)+W_t-W_s}\Big(1-\int_s^te^{-\frac{1}{2}(r-s)-W_r+W_s}E\big[\partial_x\Phi_{s,r}(x)\big]\rmd W_r\Big)\\
&=e^{\frac{1}{2}t+W_t}\Big(e^{-\frac{1}{2}s-W_s}-\int_s^te^{-\frac{1}{2}r-W_r}\rmd W_r\Big),
\end{align*}
where the second equality holds because $E\big[\partial_x\Phi_{s,t}(x)\big]\equiv1$. For given $s\geq0$, we denote by $\varrho(s)$ the first time $t$ such that
$$e^{-\frac{1}{2}s-W_s}\leq\int_s^te^{-\frac{1}{2}r-W_r}\rmd W_r=:f_s(t).$$
Obviously, the stopping time $\varrho(s)$ is independent of $x$. This implies that for almost all $\omega\in\Omega$, $\partial_x\Phi_{s,t}(x,\omega)$ is strictly greater than zero for any $x\in\mathbb{R}$, which guarantees that $\Phi_{s,t}(\omega)$ is a homeomorphism from $\mathbb{R}$ to itself when $t<\varrho(s,\omega)$ for given $s>0$. However, when $t\geq\varrho(s,\omega)$, the sign of $\partial_x\Phi_{s,t}(x,\omega)$ is indefinite for any $x\in\mathbb{R}$ due to It\^o's integral, making it difficult to determine the homeomorphic property over longer time intervals.
\end{exam}

\begin{rem}
\begin{enumerate}
  \item This example illustrates that $\tau(s,x)$ in Theorem \ref{thm} in general cannot take on the value $+\infty$, as $\varrho(x,s)$ is greater than $\tau(s,x)$.
  \item This example indicates that the globally homeomorphic property cannot be guaranteed at any time except the initial time, even when the global Lipschitz condition is satisfied. Specifically, if the stopping time $\varrho(x,s)$ is independent of $x$, then the globally homeomorphic property can only be guaranteed up to the stopping time, rather than at arbitrary times. From this perspective, we can predict that the dynamic behavior of McKean-Vlasov SDEs is more complex than that of classical SDEs.
\end{enumerate}
\end{rem}

\end{document}